\newcommand{\D}{\displaystyle}
\newtheorem{lemma}{Lemma}[section]
\newtheorem{theorem}[lemma]{Theorem}
\newtheorem{prop}[lemma]{Proposition}
\newtheorem{remark}[lemma]{Remark}
\begin{document}
\title[]{Stability of the selfsimilar dynamics\\ of a vortex filament}
\author[V. Banica]{Valeria Banica}
\address[V. Banica]{Laboratoire Analyse et probabilit\'es (EA 2172)\\D\'eptartement de Math\'ematiques\\ Universit\'e d'Evry, 23 Bd. de France, 91037 Evry\\ France, Valeria.Banica@univ-evry.fr} 

\author[L. Vega]{Luis Vega}
\address[L. Vega]{Departamento de Matematicas, Universidad del Pais Vasco, Aptdo. 644, 48080 Bilbao, Spain, luis.vega@ehu.es}

\maketitle

\begin{abstract}
In this paper we continue our investigation about selfsimilar solutions of the vortex filament equation, also known  as the binormal flow (BF) or the localized induction equation (LIE). Our main result is the stability of the selfsimilar dynamics  of small pertubations of  a given selfsimilar solution. The proof relies on finding precise asymptotics in space and time for the tangent and the normal vectors of the perturbations. A main ingredient in the proof  is the control of the evolution of weighted norms for a cubic 1-D Schr\"odinger equation, connected to the binormal flow by Hasimoto's transform.

\end{abstract}

\tableofcontents

\section{Introduction}
We consider the geometric PDE 
\begin{equation}\label{binormal}
\chi_t=\chi_x\land\chi_{xx}
\end{equation}
that is usually known as  the binormal flow (BF) or the localized induction equation (LIE).
Above $\chi=\chi(t,x)\in\mathbb R^3$, $x$ denotes the arclength parameter and $t$ the time variable. Using the Frenet frame, the above equation can be written as
\begin{equation*}\label{binormal1}
\chi_t=c\,b,
\end{equation*}
where $c$ is the curvature of the curve and $b$ its binormal vector. This geometric flow was proposed by Da Rios in 1906 \cite{DaR} as an approximation of the evolution 
of a vortex filament in a 3-D incompressible inviscid fluid (see also \cite{ArHa}). 
We refer the reader to  \cite{AlKuOk}, \cite{Ba}, \cite{Sa} and \cite{MaBe} for an analysis and discussion about the limitations of this model and to \cite{Ri} for a survey about Da Rios' work. Local well-posedness results for the binormal flow were obtained when curvature and torsion are in high order Sobolev spaces, 
see \cite{Ha, Ko,FuMi}. For less regular closed curves Jerrard and Smets obtained recently in  \cite{JeSm1,JeSm2} a result of global existence for a weak version of the binormal flow. They also proved a weak-strong uniqueness property, as long as self-intersections do not occur.

The selfsimilar solutions with respect to scaling of \eqref{binormal} are easily found by first fixing the ansatz 
\begin{equation}\label{ansatz}
\chi(t,x)=\sqrt{t}\,G\left(\frac {x}{\sqrt{t}}\right).
\end{equation}
Plugging this ansatz in \eqref{binormal} and eliminating time one obtains the ODE
\begin{equation}\label{eq1.1}
\frac12 G-\frac s2 G^\prime=G^\prime\land G^{\prime\prime}.
\end{equation}
After differentiation in $s$, calling $T(s)=G^\prime(s)$, and using the system of Frenet equations we get
$$-\frac s2\,c\,n=-\frac s2 \,T^\prime=T\land T^{\prime\prime}= c_s\, b-c\tau\, n,$$
where $n$ denotes the normal vector and $\tau$ the torsion. Hence we conclude that the selfsimilar solutions are characterized by the geometric conditions
\begin{equation*}\label{ctauselfsim}
c(s)=a,\quad\qquad\tau(s)=\frac{s}{2},
\end{equation*}
for a parameter $a\in\mathbb R$ (see \cite{Bu}). The case $a=0$ gives a straight line so that we can assume without loss of generality that $a>0$. Given $a$, the corresponding solutions of \eqref{binormal} are  unique modulo a translation and a rotation. Indeed, assume that the Frenet frame $(T,n,b)$ at $s=0$ is the identity matrix, so that from \eqref{eq1.1} we obtain $G(0)=2a\,b(0)=(0,0,2a)$. Call $G_a$ the corresponding curve and $T_a$ its unit tangent. Hence
we conclude that
$$\chi_a(t,x)= \sqrt{t}\,G_a\left(\frac {x}{\sqrt{t}}\right)$$
is a solution of \eqref{binormal} for $t>0$ and that
$$\mathrm{T}_a(t,x)= T_a\left(\frac {x}{\sqrt{t}}\right)$$
solves  for $t>0$
$$\mathrm{T}_t=\mathrm{T}\land\mathrm{T}_{xx},\qquad |\mathrm{T}|=1,$$
usually known as the Schr\"odinger map into the $\mathbb S^2$ sphere. We denote by 
$$N(t,x)=(n+ib)(t,x)\,e^{i\int_0^x \tau(t,s)\,ds},$$
the ``parallel" normal vector. The properties of this frame  will be described in \S\ref{paralsection}.

 It was proved in \cite{GRV} that there exist $A^\pm_a\in\mathbb S^2$ and $B_a^\pm\in\mathbb C^2$ such that for $x>0$ (and similarly for $x<0$), 
\begin{equation}
\label{0.1}\left|\chi_a(t,x)-A_a^+\left(x+2a\frac tx\right)-4a\frac{t}{x^2}\,n_a(t,x)\right|\leq C\left(\frac{\sqrt {t}}{x}\right)^3,
\end{equation}
\begin{equation}
\label{0.2}|T_a(t,x)-A_a^+|\leq C\frac{\sqrt {t}}{x},
\end{equation}
\begin{equation}
\label{0.3}
\left|N_a(t,x)-B_a^+\,e^{ia^2\log\frac{\sqrt t}{x}}\right|\leq C\frac{\sqrt {t}}{x}.
\end{equation}
Moreover, $A_a^\pm\perp B_a^\pm$ and if we define $\theta$ as the angle between $ A_a^+$ and $-A_a^-$
\begin{equation}
\label{0.4}\sin\frac{\theta}{2}=e^{-\pi\frac{a^2}{2}}.
\end{equation}
Also the coordinates of $A_a^\pm$ and $B_a^\pm$ are given explicitly in terms of Gamma functions involving the parameter $a$ (see formula (55), (57), (47), (48), and (69) in \cite{GRV}). In particular we can define at time zero for $x>0$
\begin{equation}
\label{0.5}\lim_{t\rightarrow 0} T_a(t,x)=A_a^{+}\quad,\quad  \lim_{t\rightarrow 0} N_a(t,x)\,e^{-ia^2\log\frac{\sqrt t}{x}}=B^{+}_a,
\end{equation}
and similarly when $x<0$ using in that case $(A_a^{-},B^{-}_a).$

The reader can find in \cite{GRV}  some pictures of $G_a$ and $\chi_a$ for different values of $a$.
Also in \cite{dHGV} some numerical simulations are considered. In the figure 1.1 of that paper it is showed the remarkable similarity, at least at the qualitative level, of $\chi_a$ and the vortex filaments that appear in the flow of a fluid traversing a delta wind -see \cite{ON}. We also encourage the reader to look at the selfsimilar shape of the smoke rings in the picture 107 in \cite{Dy}. It seems from these pictures and from the numerical simulations, that the selfsimilar dynamics of these vortex filaments are rather stable. 

In our two previous papers \cite{BV1} and \cite{BV2} we obtain some results about the stability and the instability of the solutions $\chi_a$. Our approach is based on the so-called Hasimoto transformation. In \cite{Ha} the "filament function"
\begin{equation}
\label{eq1.2}
\psi(x,t)=c(x,t)\,e^{i\int_0^x \tau(s,t)\,ds}
\end{equation}
is defined and it is proved that if $c$ and $\tau$ are the curvature and the torsion respectively of a solution $\chi(x,t)$ of \eqref{binormal}, then $\psi$ solves the focusing cubic non-linear Schr\"odinger equation (NLS)
$$i\psi_t+\psi_{xx}+\frac \psi 2\left(|\psi|^2-A(t)\right)=0$$ 
for some real function $A(t)$ that depends on $c(0,t)$ and $\tau(0,t)$. In the particular case of $\chi_a$ we have that for $t>0$
\begin{equation}
\label{eq1.3}
\psi_a(x,t)=a\frac {e^{i\frac {x^2}{4t}}}{\sqrt {t}} 
\end{equation}
and $A(t)=\frac {|a|^2}{t}$. 
Hasimoto's transformation can be performed  only in case of nonvanshing curvature. This obstruction has been avoided by Koiso \cite{Ko} by using another frame than the Frenet one. 

Notice that 
$$\int |\psi_a(x,t)|^2\,dx=+\infty,$$
so that $L^2(\mathbb R)$ is not the right functional setting to study $\psi_a$. It is natural to consider the so-called pseudoconformal transformation of $\psi$ defining a new unknown $\,v\,$ as
\begin{equation}\label{calT}
\psi(t,x)=\mathcal {T}v(t,x)=\frac{e^{i\frac{x^2}{4t}}}{\sqrt{t}}\overline{v}\left(\frac 1t,\frac xt\right).
\end{equation}
Then $\,v\,$ solves 
\begin{equation}\label{GP1}
iv_t+v_{xx}+\frac {1}{2t}\left(|v|^2-a^2\right)v=0,\\
\end{equation}
and $v_a=a$ is the particular solution that corresponds to $\psi_a$. A natural quantity associated to \eqref{GP1} is the normalized energy (see \cite{BV0})
\begin{equation*}\label{energy}E(v)(t)=\frac{1}{2}\int |v_x(t)|^2\,dx-\frac{1}{4t}\int(|v(t)|^2-a^2)^2\,dx.
\end{equation*}
An immediate calculation gives that
\begin{equation*}\label{GP2}\partial_{t}E(v)(t)-\frac{1}{4t^2}\int(|v|^2-a^2)^2\,dx=0,
\end{equation*}
and in particular $E(v_a)=0.$\\

The binormal flow \eqref{binormal} is an equation that is reversible in time. If we want to study perturbations of $\chi_a$ one possibility is to go forward in time starting at time $t=0$ with  a datum close to
\begin{equation}
\label{1.4}
\chi_a(0,x)=\renewcommand{\arraystretch}{1.4}\begin{cases}
  A_a^+ x&x\ge 0\\
   A_a^- x&x\le 0,\\
  \end{cases}\renewcommand{\arraystretch}{1}
 \end{equation}
 and to construct a solution up to say time $t=1$.
Another possibility is to  give a datum at time $t=1$ close to $G_a$  and go backwards in time up to time $t=0$.

In terms of $v$ these two possibilities are rephrased as follows. First we write
$$v=a+u,$$
so that $\,u\,$ has to be a solution of
\begin{equation}\label{NLS}
iu_t+u_{xx}+\frac{a+u}{2t}(|a+u|^2-a^2)=0.
\end{equation}
In particular
$$u(1,x)=e^{i\frac{x^2}{4}}\overline{\psi}(1,x)-a.$$
Then notice that the pseudoconformal transformation sends the interval of time $[0,1]$ into the interval $[1,\infty)$. So that the first possibility, that is to say to go forward in time in \eqref{binormal}, amounts to give a small asymptotic state  at time infinity and construct a solution for $t\ge1$ of \eqref{NLS} that remains close to it in an appropriate sense. The second possibility is to solve the initial value problem of \eqref{NLS}  with some small datum at $t=1$ and to prove the existence of a scattering state at infinity with a size controlled by that one of the initial datum. In \cite{BV1} and \cite{BV2} we study the two problems. Finally let us notice that long time asymptotics were studied for equations    
with a common point with (12) in terms of the nonlinearity, like the linear     
Schrodinger equation with a time depending potential (see ch.4 of               
\cite{DeGe}), the 1-D cubic NLS (\cite{Oz},\cite{Ca},\cite{HaNa}),         
the 2-D Gross-Pitaevskii equation (\cite{GuNaTs}), and the 2-D       
quadratic NLS (\cite{MoToTs},\cite{ShTo},\cite{GeMaSh}). However the framework, approach and results        
for (12) are quite different.

More concretely in \cite{BV2} we consider small initial data at time $t=1$, $u_1(x)=u(1,x) \in X_1^\gamma$, $0<\gamma<\frac 14$, where
\begin{equation}\label{Xtau}
\|f\|_{X_{t_0}^\gamma}=\frac{1}{t_0^\frac 14}\|f\|_{L^2}+\frac{t_0^\gamma}{\sqrt{t_0}}\||\xi|^{2\gamma}\hat{f}(\xi)\|_{L^\infty(\xi^2\leq 1)},
\end{equation}
and $\hat f$ denotes the Fourier Transform of $f$. 
The smallness of $u_1$ in $X^\gamma_1$ is with respect to $a$, with a nonlinear dependence. 
In Theorem 1.1 of \cite{BV2} we prove that there exists $f_+\in L^2$ for which
\begin{equation}\label{scatt}
\left\|u(t)-e^{i\frac {a^2}2\log t}e^{i(t-1)\partial_x^2}f_+\right\|_{L^2}\leq \frac{C(a,u_1)}{t^{\frac14-(\gamma+\delta)}}\,\|u_1\|_{X_1^\gamma}\underset{t\longrightarrow\infty}{\longrightarrow} 0,
\end{equation}
for any $0<\delta<1/4-\gamma$. Finally, the asymptotic state  $f_+$ satisfies for all $\xi^2\leq 1$ the estimate
$$|\xi|^{2(\gamma+\delta)}|\hat{f_+}(\xi)|\leq C(a,\delta)\,\|u_1\|_{X_1^{\gamma}}.$$
Then, from a solution $u(t)$ of \eqref{NLS} one constructs a solution $\chi(t)$ of the binormal flow \eqref{binormal} by setting $\psi=\mathcal T(a+u)$ and solving the Frenet system with curvature $c(t,x)=|\psi(t,x)|$ and torsion $\tau(t,x)=\partial_x\arg\psi(t,x)$ (see for instance \cite{BV1}).
Notice that since we are considering small perturbations of the selfsimilar solutions, the curvature function $c(t,x)$ does not vanish and Hasimoto's transform make sense.

The main purpose of this paper is to prove that most of the properties \eqref{0.1}-\eqref{0.5} that describe the dynamics of the selfsimilar  solution $\chi_a$ still hold for the perturbations $\chi$ under some extra conditions on $u_1$. As a consequence the selfsimilar dynamics going backwards in time remain stable under small perturbations of $G_a(x)=\chi_a(1,x)$.
\medskip

Our main result is the following one.

\begin{theorem}\label{errortang}
Let $a>0$ and let $u_1$ be a function such that $\partial_k u_1$ is small with respect to $a$ in $X_1^\gamma$, with $0\leq\gamma\leq\frac 14$, for $0\leq k\leq 4$. Moreover, suppose that $xu_1, x\partial_x u_1$ are in $L^2$, without smallness condition. Given $\chi_1(0)\in\mathbb R^3$ and $\partial_s\chi_1( 0)\in\mathbb S^2$, let $\chi_1(x)$ be the corresponding curve with filament function $ae^{i\frac{x^2}{4}}+u_1(x)e^{i\frac{x^2}{4}}$. Then the unique Lipschitz solution $\chi(t,x)$ of the binormal flow for $0\leq t\leq 1$ with $\chi(1,x)=\chi_1(x)$ constructed in \cite{BV2} enjoys the following properties. The choice between $\pm$ will be determined by $|x|=\pm x$. 

\noindent
(i) {\bf{\em{Asymptotics in space for the tangent vector and the normal vectors at fixed time:}}}
There exist $T^{\pm\infty}\in\mathbb S^2$ and $N^{\pm\infty}\in\mathbb C^3$ such that for all $0<t\leq 1$, and $x\neq 0$\footnote{In the following the relations for $x>0$ will involve $T^{+\infty}$ and $N^{+\infty}$ and the ones for $x<0$ will involve $|x|$, $T^{-\infty}$ and $N^{-\infty}$.},
$$|T(t,x)-T^{\pm\infty}|\leq C\|\partial_xu_1\|_{X^\gamma_1}\,\frac 1{\sqrt{x}}+C(a+\|u_1\|_{X^\gamma_1}+\|\partial_xu_1\|_{X^\gamma_1})\frac{\sqrt t}{x},$$
$$\left|N(t,x)-N^{\pm\infty}\,e^{ia^2\log\frac{\sqrt t}{x}}\right|\leq C\|\partial_xu_1\|_{X^\gamma_1}\,\frac 1{\sqrt{x}}+C(1+a^2)(a+\|u_1\|_{X^\gamma_1}+\|\partial_xu_1\|_{X^\gamma_1})\frac{\sqrt t}{x}$$
$$+C(a+\|u_1\|_{X^\gamma_1}+\|\partial_xu_1\|_{X^\gamma_1})\|u_1\|_{X^\gamma_1}\,\frac{\sqrt t}{\sqrt x}+C\,(a^2+a^4)\,\frac{t}{x^2}.$$
(ii) {\bf{\em{Further informations on the tangent vector:}}} For all $x\neq 0$ and all $t>0$ ,
$$|T(t,x)-T^{\pm\infty}| \leq C(\|u_1\|_{X^\gamma_1}+\|\partial_xu_1\|_{X^\gamma_1})+C(a+\|u_1\|_{X^\gamma_1}+\|\partial_xu_1\|_{X^\gamma_1})\frac{\sqrt t}{x}$$
$$+C(a)(\|u_1\|_{X^\gamma_1}+\|\partial_xu_1\|_{X^\gamma_1}+\|xu_1\|_{L^2})\,\frac{t^\frac 14}{x}.$$
(iii) {\bf{\em{Formation of a corner at time 0:}}} For all $x\neq 0$ 
$$|\chi(0,x)-\chi(0,0)-T^{\pm\infty} x|\leq C\sqrt t+|x| \left(C(\|u_1\|_{X^\gamma_1}+\|\partial_xu_1\|_{X^\gamma_1})\right.$$
$$\left. +C(a+\|u_1\|_{X^\gamma_1}+\|\partial_xu_1\|_{X^\gamma_1})\frac{\sqrt t}{x}+C(a)(\|u_1\|_{X^\gamma_1}+\|\partial_xu_1\|_{X^\gamma_1}+\|xu_1\|_{L^2})\,\frac{t^\frac 14}{x}\right).$$
(iv) {\bf{\em{Existence of a limit for the tangent at time 0:}}} For all $x\neq 0$ there is a limit for $T(t,x)$ as $t$ goes to zero and 
$$|T(t,x)-T(0,x)|=O(t^{\frac 16^-}).$$
Moreover,
 $$T_x(0)\in L^1\cap L^2(\mathbb R\setminus \{0\}).$$
(v) {\bf{\em{The exact value of the angle of the corner:}}} The angle of the self-similar solutions is recovered at time $0$,
$$\sin\frac{(T(0,0^+),-T(0,0^-))}{2}=e^{-\pi\frac{a^2}{2}}.$$
More precisely, modulo a rotation, we recover at the singularity point the self-similar structure
$$\lim_{x\rightarrow 0^\pm} \lim_{t\rightarrow 0} T(t,x)=A_a^\pm\quad,\quad  \lim_{x\rightarrow 0^\pm} \lim_{t\rightarrow 0} N(t,x)\,e^{-ia^2\log\frac{\sqrt t}{x}}=B_a^\pm.$$
\end{theorem}
\medskip

\begin{remark}\label{remcond0} The above theorem gives a precise result about the dynamics of the perturbed filament in the selfsimilar region $|x|>\sqrt t$ for $1\geq t\geq0$. In particular it proves the existence of a natural binormal frame associated to the curve $\chi(0,x)$ even though it has a corner at $x=0$. For doing this it is crucial to be able to use that $u(t)$ belongs to weighted $L^2$ spaces. All the analysis follows from the property that the tangent vectors
of the perturbed filament are fixed for $x=\pm \infty$ and $1\geq t>0$. Once this is proved we integrate the Frenet frame, in fact we use the so-called parallel frame that turns out to be much more convenient,  from $\pm \infty$ to $|x|>\sqrt t$. This is enough for our purposes.
\end{remark}

\begin{remark}\label{remcond}
 We do not obtain anything new in the interior region $|x|<\sqrt t$. At this respect we recall Theorem 1.4 of \cite{BV1}. In that theorem it is proved that if the zero Fourier mode of the asymptotic state that determines  $u(t,x)$ vanishes in an appropriate sense, then $\chi(t,x)$ remains close to $\chi_a(x,t)$ together with their respective Frenet frames also in the region $|x|\leq \sqrt t$. In particular the trajectory $\chi(t,0)$ and the one of the frame $(T, n, b)(t,0)$ remain close to
$\chi_a(t,0)$ and to the identity matrix.  As a consequence, a very natural question is to characterize the asymptotic states of solutions $u(t)$ that belong to weighted $L^2$ spaces. It turns out that the answer is more delicate than what one could expect so that we will study it in a forthcoming paper. Finally, recall that in the appendix B2 of \cite{BV2} it is proved that the zero Fourier modes of solutions $u(t)$ that are in weighted $L^2$ spaces typically grow logarithmically in time.
\end{remark}

The paper is organized as follows. In section 2 we introduce the parallel frame and its connection with the Frenet frame. The proof of our theorem is given in sections 3-5. In the appendix we show some estimates about the evolution in time of the norms of weighted $L^2$ spaces for the solutions $u(t)$ of \eqref{NLS}, needed in Lemma \ref{L1est}.\\

{\bf{Acknowledgements:}} The authors are grateful to the referee's suggestions of improvements of the presentation of the paper. 
First author was partially supported by the French ANR project  R.A.S. ANR-08-JCJC-0124-01. The second author was partially supported by the grants UFI 11/52, MTM 2011-24054 of MEC (Spain) and FEDER. \smallskip

\section{The parallel frame}\label{paralsection}
In the original work of Hasimoto \cite{Ha}, for performing the transformation \eqref{eq1.2} a non-vanishing condition on the curvature was imposed. This condition has been removed by Koiso \cite{Ko} who worked with another frame than the Frenet one. Although in our case the curvature does not vanish for small perturbations of the selfsimilar solutions, we shall take advantage of this Hasimoto-type link built between the cubic 1-D NLS and the binormal flow \eqref{binormal}. We shall detail it below. The use of this frame makes the calculations of the next sections much shorter.

Given $a>0$ we start with a solution of 
\begin{equation}\label{NLSa}
i\psi_t+\psi_{xx}+\frac \psi 2\left(|\psi|^2-\frac{a^2}{t}\right)=0.
\end{equation}
As explained in the Introduction we shall consider
$$\psi(t,x)=\frac{e^{i\frac{x^2}{4t}}}{\sqrt t}(a+\overline u)\left(\frac 1t,\frac xt\right).$$
We define
$$\alpha(t,x)=\Re\psi(t,x)\,\,\,,\,\,\,\beta(t,x)=\Im\psi(t,x).$$
Then, for a given orthonormal frame $(T,e_1,e_2)(0,0)$ as initial data we define an orthonormal frame $(T,e_1,e_2)(t,x)$ by imposing
$$
\left(\begin{array}{c}
T\\e_1\\e_2
\end{array}\right)_x(t,x)=
\left(\begin{array}{ccc}
0 & \alpha & \beta \\ -\alpha & 0 & 0 \\ -\beta &  0 & 0 
\end{array}\right)
\left(\begin{array}{c}
T\\e_1\\e_2
\end{array}\right)(t,x)\,\,\,,$$
and $$\left(\begin{array}{c}
T\\e_1\\e_2
\end{array}\right)_t(t,0)=
\left(\begin{array}{ccc}
0 & -\beta_x & \alpha_x \\ \beta_x & 0 & -\frac{|\psi|^2}{2}+\frac{a^2}{2t} \\ -\alpha_x &  \frac{|\psi|^2}{2}-\frac{a^2}{2t} & 0 
\end{array}\right)
\left(\begin{array}{c}
T\\e_1\\e_2
\end{array}\right)(t,0).$$
We want to compute $T_t(t,x)$.
For all $(t,x)$ we denote by $(a,b,c)(t,x)$ the functions such that 
$$\left(\begin{array}{c}
T\\e_1\\e_2
\end{array}\right)_t(t,x)=
\left(\begin{array}{ccc}
0 & a & b \\ -a & 0 & c \\ -b &  -c & 0 
\end{array}\right)
\left(\begin{array}{c}
T\\e_1\\e_2
\end{array}\right)(t,x).$$
We first notice that $(a,b,c)(t,0)=(-\beta_x,\alpha_x,-\frac{|\psi|^2}{2}+\frac{a^2}{2t})(t,0)$.
 By computing 
$$T_{tx}=a_xe_1+b_xe_2-(a\alpha+b\beta)T\,\,\,,\,\,\,T_{xt}=\alpha_te_1+\beta_te_2+\alpha e_{1t}+\beta e_{2t},$$  
$$e_{1tx}=-a_xT+c_xe_2-a(\alpha e_1+\beta e_2)-c\beta T\,\,\,,\,\,\,e_{1xt}=-\alpha_t T-\alpha(ae_1+be_2),$$ 
we obtain that
$$\left(\begin{array}{c}
a\\b\\c
\end{array}\right)_x=
\left(\begin{array}{ccc}
0 & 0 & -\beta \\ 0 & 0 & \alpha \\ \beta &  -\alpha & 0 
\end{array}\right)
\left(\begin{array}{c}
a\\b\\c
\end{array}\right)+\left(\begin{array}{c}
\alpha_t\\\beta_t\\0
\end{array}\right),$$
which is equivalent to
$$\left\{\begin{array}{c}
i(\alpha+i\beta)_t +(b-ia)_x-c(\alpha+i\beta)=0,\\
c_x=-\left(\frac{\alpha^2+\beta^2}{2}\right)_x,
\end{array}\right.$$
so we obtain $(a,b,c)(t,x)=(-\beta_x,\alpha_x,-\frac{|\psi|^2}{2}+\frac{a^2}{2t})(t,x)$.
Now we can see that $T$ is a solution of 
$$T_t=-\beta_x e_1+\alpha_x e_2=T\land T_{xx}.$$ 
Therefore, by choosing a point $\chi(t_0,x_0)\in\mathbb R^3$ and  by defining $\chi(t,x)$ as
$$\chi(t,x)=\chi(t_0,x_0)+\int_0^t (T\land T_{xx})(t',x_0)dt'+\int_{x_0}^x T(t,s)ds,$$
we deduce that $\chi$ solves the binormal flow \eqref{binormal}.

In conclusion, given a solution of the cubic 1-D NLS \eqref{NLSa}, we can construct an orthonormal frame $(T,e_1,e_2)$ which leads to a solution of the binormal flow \eqref{binormal}.
Finally we compute the derivatives of the tangent vector and of the normal complex vector $N=e_1+ie_2$ in terms of $\psi$. This will be useful in the following  sections: 
\begin{equation}
\label{2.1}
T_x=\alpha e_1+\beta e_2=\Re\overline \psi N,
\end{equation}

\begin{equation}
\label{2.2}N_x=e_{1x}+ie_{2x}=-\alpha T-i\beta T=-\psi T,
\end{equation}

\begin{equation}
\label{2.3}T_t=\alpha_xe_2-\beta_xe_1=\Im\overline{\psi_x}N,
\end{equation}

\begin{equation}
\label{2.4}N_t=\beta_xT+\left(-\frac{|\psi|^2}{2}+\frac{a^2}{2t}\right) e_2-i\alpha_xT-i\left(-\frac{|\psi|^2}{2}+\frac{a^2}{2t}\right) e_1=-i\psi_x T-i\,\frac{a^2-t|\psi|^2}{2t}\, N.
\end{equation}

\begin{remark} In the case of the Frenet frame  one defines $c$ and $\tau$ from $\psi$ by
$$c(t,x)=|\psi(t,x)|\,\,\,,\,\,\,\tau(t,x)=\Im\frac{\psi_x(t,x)}{\psi(t,x)},$$
then the frame $(T,n,b)$ by
$$
\left(\begin{array}{c}
T\\n\\b
\end{array}\right)_x=
\left(\begin{array}{ccc}
0 & c & 0 \\ -c & 0 & \tau \\ 0 &  -\tau & 0 
\end{array}\right)
\left(\begin{array}{c}
T\\n\\b
\end{array}\right)\,\,\,,\,\,\,\left(\begin{array}{c}
T\\n\\b
\end{array}\right)_t=
\left(\begin{array}{ccc}
0 & -c\tau &  c_x \\ c\tau & 0 & \frac{c_{xx}-c\tau^2}{c} \\ -c_x &  -\frac{c_{xx}-c\tau^2}{c} & 0 
\end{array}\right)
\left(\begin{array}{c}
T\\n\\b
\end{array}\right).$$

One can see the link between these two constructions by considering 
(see \cite {Ha} and also page 5 of \cite{GS}) 

$$e_1(t,x)=\cos\int_0^x \tau(t,s)ds\,n(t,x)-\sin\int_0^x \tau(t,s)ds\,b(t,x)\quad,\quad e_1(0,x)=n(0,x),$$$$e_2(t,x)=\sin\int_0^x \tau(t,s)ds\,n(t,x)+\cos\int_0^x \tau(t,s)ds\,b(t,x)\quad,\quad e_2(0,x)=b(0,x),$$
so
$$
\left(\begin{array}{c}
T\\e_1\\e_2
\end{array}\right)_x=
\left(\begin{array}{ccc}
0 & \alpha & \beta \\ -\alpha & 0 & 0 \\ -\beta &  0 & 0 
\end{array}\right)
\left(\begin{array}{c}
T\\e_1\\e_2
\end{array}\right),$$
with
$$\alpha(t,x)=c(t,x)\cos\int_0^x \tau(t,s)ds\,\,\,,\,\,\beta(t,x)=c(t,x)\sin\int_0^x \tau(t,s)ds.$$
Moreover, one gets that the complex normal vector N is written as
$$N=e_1+ie_2=n(\cos+i\sin)+b(-\sin+i\cos)=(n+ib)(\cos+i\sin)=(n+ib)e^{i\int_0^x \tau(t,s)\,ds}.$$
\end{remark}\bigskip

\section{Asymptotics in space for the tangent vector and the normal vectors}
In this section we shall prove the first part i) of Theorem \ref{errortang}. First we shall prove that the tangent vector, at fixed time, has a limit in space at infinity. Eventually we shall prove that this limit is independent of time. Then we shall do the same for the normal vector $N$ modulated appropriately.  
\subsection{The limit in space for $T(t,x)$ for a fixed given $t$}
\begin{lemma}\label{lemmaTangestx}
Let $0<t\leq 1$. 
There exists a limit $T^\infty(t)$ for $T(t,x)$ as $x$ goes to infinity and
\begin{equation}\label{Tangestx}|T(t,x)-T^\infty(t)|\leq C(a+\|u(1/t)\|_{H^1})\frac{\sqrt t}{x}+C\|\partial_xu(1/t)\|_{L^2}\frac 1{\sqrt{x}}.\end{equation}
\end{lemma}

\begin{proof}
Recall that  \eqref{2.1} gives us $T_x=\Re\overline \psi N$ and that 
$$\psi(t,x)=\frac{e^{i\frac{x^2}{4t}}}{\sqrt t}(a+\overline u)\left(\frac 1t,\frac xt\right).$$ 
In what follows we are going to make a repeated use of integration by parts trying to exploit the high oscillations of the function  $\frac{e^{i\frac{x^2}{4t}}}{\sqrt t}$. From \eqref{2.1} we get
$$\int_x^\infty T_s(t,s)ds=\Re\int_x^\infty\,\overline \psi N (t,s)\,ds=\Re\int_x^\infty\,\frac{e^{-i\frac{s^2}{4t}}}{\sqrt t}(a+ u)\left(\frac 1t,\frac st\right) N (t,s)\,ds$$
$$=-\Re\frac{2t}{-ix}\,\overline \psi(t,x)N (t,x)-\Re\int_x^\infty \frac{2t}{is^2}\,\overline\psi(t,s)N (t,s)\,ds$$
$$-\Re\int_x^\infty e^{-i\frac{s^2}{4t}}\frac{2}{-is\sqrt t}\,(u_s)\left(\frac 1t,\frac st\right) N (t,s)\,ds-\Re\int_x^\infty \frac{2 t}{-is}\,\overline\psi(t,s) N_s (t,s)\,ds.$$
First we notice that from \eqref{2.2} we have $N_x=-\psi T$ so the last term vanishes. Then, since $H^1(\mathbb R)\subset L^\infty(\mathbb R)$ and $|N|=2$ it follows that
\begin{equation}\label{Tangest}
\left|\int_x^\infty T_s(t,s)ds-\Im\int_x^\infty e^{-i\frac{s^2}{4t}}\frac{2}{s\sqrt t}\,(u_s)\left(\frac 1t,\frac st\right) N (t,s)\,ds\right|\leq C(a+\|u(1/t)\|_{H^1})\frac{\sqrt t}{x}.
\end{equation}
This implies the Lemma by using Cauchy-Schwarz inequality.
\end{proof}

\subsection{$T^\infty(t)$ is independent of time}\label{subsectionTindep} 
\begin{lemma}\label{lemmaTanginfty}
The function $T^\infty(t)$ is an independent function of time on $]0,1]$,
$$T^\infty(t)=T^\infty(1)=T^\infty.$$
\end{lemma}

\begin{proof}
Let $0<\epsilon$. We consider $1<x$. Since \eqref{2.3} states $T_t=\Im\overline{\psi_x}N$, we have
$$T(t,x)-T(1,x)=\int_t^1 T_{t'}(t',x)\,dt'=\Im\int_t^1\overline{\psi_x}(t',x)N(t',x)\,dt'$$
$$=\Im\int_t^1\frac{e^{-i\frac{x^2}{4t'}}}{2t'\sqrt {t'}}\,(2(u_x)-ix(a+u))\left(\frac 1{t'},\frac x{t'}\right) N (t',x)\,dt'.$$
Again we will exploit the high oscillations of the function  $\frac{e^{i\frac{x^2}{4t}}}{\sqrt t}$, by integrating by parts 
$$T(t,x)-T(1,x)=\left[\Im e^{-i\frac{x^2}{4t'}}\frac{2\sqrt {t'}}{ix^2}\left(2(u_x)-ix(a+u)\right)\left(\frac 1{t'},\frac x{t'}\right) N (t',x)\right]_t^1$$
$$-\Im\int_t^1e^{-i\frac{x^2}{4t'}}\partial_{t'}\left(\frac{2\sqrt {t'}}{ix^2}\left(2(u_x)-ix(a+u)\right)\left(\frac 1{t'},\frac x{t'}\right) N (t',x)\right)\,dt'.$$
Using  the fact that $u$ and its derivative are bounded, 
$$\left|T(t,x)-T(1,x)\right|\leq \frac{C}{x}(a+\|u\|_{L^\infty_{(1,1/t)}H^2})$$
$$+\left|\Im\int_t^1e^{-i\frac{x^2}{4t'}}\frac{2\sqrt {t'}}{ix^2}\,\partial_{t'}\left(2(u_x)-ix\,u\right)\left(\frac 1{t'},\frac x{t'}\right)N(t',x)\,dt'\right|$$
$$+\left|\Im\int_t^1e^{-i\frac{x^2}{4t'}}\frac{2\sqrt {t'}}{ix^2}(2(u_x)-ix(a+u))\left(\frac 1{t'},\frac x{t'}\right)\left(-i\psi_x T-i\,\frac{a^2-t'|\psi|^2}{2t'}\,  N\right)(t',x)\,dt'\right|.$$
In the last integral we have used the expression \eqref{2.4}: $\partial_{t'}N=-i\psi_x T-i\,\frac{a^2-t'|\psi|^2}{2t'}\, N$. 
Since 
$$\left|\frac{a^2-t'|\psi|^2}{2t'}\right|\leq \frac{2a\|u(1/t')\|_{H^1}+\|u(1/t')\|_{H^1}^2}{t'},$$
the contribution of the last term is of order $\frac{1}{x}$.
Concerning the $-i\psi_x T$ part, recall that 
$$\psi_x(t',x)=\frac{e^{i\frac{x^2}{4t'}}}{2t'\sqrt {t'}}(2(\overline u_x)+ix(a+\overline u))\left(\frac 1{t'},\frac x{t'}\right),$$
so we get again a $\frac{1}{x}$ bound except for the term with no inverse power of $x$. But the integrant of this term is real, so the term vanishes. Let us notice that $u_t$, $u_x$, $u_{tx}$ and $u_{xx}$ are in $L^\infty_{tx}$ because we are assuming  that $u_1\in H^4$ and we shall include  in the upper-bound this dependence. 
Therefore the first integral has a bounded of order $\frac{1}{x}$, except for the term where the derivative in time falls on $u$ and we loose the inverse powers of $x$. Summarizing, we have
$$\left|T(t,x)-T(1,x)\right|\leq \frac{C(u,a,t)}{x}+\left|\Im \int_t^1e^{-i\frac{x^2}{4t'}}\frac{2}{t'\sqrt{t'}}\,(u_x)\left(\frac 1{t'},\frac x{t'}\right)N(t',x)\,dt'\right|,$$
with the constant $C(u,a,t)$ depending on $\|u\|_{L^\infty_{(1,1/t)}H^4}$, $a$ and $t$. 
The integral is of the same type as the first term in the initial expression of $T(t,x)-T(1,x)$, and, as we have seen above, by performing again an integration by parts, it has an upper-bounded of order $\frac{1}{x}$.
Therefore
$$\left|T(t,x)-T(1,x)\right|\leq \frac{C(u,a,t)}{x},$$
with the constant $C(u,a,t)$ depending on $\|u\|_{L^\infty_{(1,1/t)}H^4}$, $a$ and $t$. Notice that for initial data $u_1$ with $\partial_ku_1$ small in $ X^\gamma_1$ for $0\leq k\leq 4$ we have obtained in \cite{BV2} that $\|u\|_{L^\infty_{(1,1/t)}H^4}$ is finite. 
By taking $x$ large with respect to $\|u\|_{L^\infty_{(1,1/t)}H^4}^{-1}$, $a^{-1}$, $t^{-1}$ and to $\epsilon^{-1}$ and by using also Lemma \ref{lemmaTangestx} we obtain that 
$$|T(t,x)-T(1,x)|\leq \epsilon\,\,\,,\,\,\,|T(1,x)-T^\infty(1)|\leq \epsilon\,\,\,,\,\,\,|T(t,x)-T^\infty(t)|\leq \epsilon,$$
so
$$|T^\infty(t)-T^\infty(1)|\leq \epsilon$$
for all $\epsilon>0$ and the Lemma follows. 
\end{proof}
Since the initial data $u_1$ and its space derivatives are small in $ X^\gamma_1$ for $0\leq k\leq 4$ we have from \cite{BV2} that $\|u\|_{L^\infty_{(1,1/t)}H^1}$ is finite.  
Therefore, in view of \eqref{Tangestx}, the first part of (i) in Theorem \ref{errortang}  is proved. Moreover, \eqref{Tangest} becomes
\begin{equation}\label{Tangestbis}
\left|T(t,x)-T^\infty+\Im\int_x^\infty \frac{2}{s\sqrt t}\,(u_s)\left(\frac 1t,\frac st\right) e^{-i\frac{s^2}{4t}}N (t,s)ds\right|\leq C(a+\|u(1/t)\|_{H^1})\frac{\sqrt t}{x}.
\end{equation}

\subsection{The limit in space for $N(t,s)$ for a fixed $t$}
We define the following modulation of the normal vector $N$:
$$\tilde N(t,x)=N(t,x)e^{i\Phi}\,\,\,,\,\,\, \Phi(t,x)=-\frac{a^2}{2}\log t+a^2\log |x|.$$
\begin{lemma}\label{lemmaNasy}
Let $0<t\leq 1$. 
There exists a limit $N^\infty(t)$ for $\tilde N(t,x)$ as $x$ goes to infinity and
\begin{equation}\label{Eestx}\left|\tilde N(t,x)-N^\infty(t)\right|\leq C\|\partial_x u(1/t)\|_{L^2}\frac 1{\sqrt{x}}+\frac{C(1+a^2)(a+\|u(1/t)\|_{H^1})\sqrt t}{x}\end{equation}
$$+\frac{C(a+\|u(1/t)\|_{H^1})\|u(1/t)\|_{L^2}\sqrt t}{\sqrt x}+\frac{C\,(a^2+a^4)\,t}{x^2}.$$
\end{lemma}

\begin{proof}
We shall use formula \eqref{2.2} $N_x=-\psi T$ and we shall perform integration by parts from the oscillating phase $e^{i\frac{x^2}{4t}}$ in $\psi$. We get 
$$\int_x^\infty \tilde N_s(t,s)ds=\int_x^\infty\left(-\psi T+\frac{ia^2}{s}N\right)e^{i\Phi}$$
$$=\frac{2 t}{ix}\,\psi T\,e^{i\Phi}-\int_x^\infty \frac{2t}{is^2}\,\psi Te^{i\Phi}\,ds$$
$$+\int_x^\infty e^{i\frac{s^2}{4t}}\frac{2}{is\sqrt t}\,(\overline u_s)\left(\frac 1t,\frac st\right)T(t,s)\,e^{i\Phi}\,ds+\int_x^\infty e^{i\frac{s^2}{4t}}\frac{2\sqrt t}{is}(a+ \overline u)\left(\frac 1t,\frac st\right) T_s (t,s)\,e^{i\Phi}\,ds$$
$$-a^2\int_x^\infty\frac{2t}{s^2}\,\psi T\,e^{i\Phi}\,ds+\int_x^\infty \frac{ia^2}{s}Ne^{i\Phi}.$$
In view of the $\frac{C(a+\|u(1/t)\|_{H^1})}{\sqrt t}$ bound on $\psi$, the first two terms and the fifth one are upper-bounded by $\frac{C(a+\|u(1/t)\|_{H^1})\sqrt t}{x}$.  Formula \eqref{2.1} insures us that $T_s=\Re\overline{\psi}N$, and by using Cauchy-Schwarz inequality we can upper-bound by $ \frac{C(a+\|u(1/t)\|_{H^1})\|u(1/t)\|_{L^2}\sqrt t}{\sqrt x}$ the part involving $u$ in the fourth term. We get
$$\left|\int_x^\infty \tilde N_s(t,s)\,ds-\int_x^\infty e^{i\frac{s^2}{4t}}\frac{2}{is\sqrt t}\,(\overline u_s)\left(\frac 1t,\frac st\right)T(t,s)\,e^{i\Phi}\,ds\right|$$
$$\leq \frac{C(a+\|u(1/t)\|_{H^1})\sqrt t}{x}+\frac{C(a+\|u(1/t)\|_{H^1})\|u(1/t)\|_{L^2}\sqrt t}{\sqrt x}$$
$$+\left|\int_x^\infty e^{i\frac{s^2}{4t}}\frac{2\sqrt t}{is}\,a\,\Re\left(\frac{ae^{-i\frac{s^2}{4t}}}{\sqrt t}\,N\right)\,e^{i\Phi}\,ds+\int_x^\infty \frac{ia^2}{s}N\,e^{i\Phi}\right|.$$
We obtain then the cancellation of the non-oscillatory terms involving $N$,
$$\left|\int_x^\infty \tilde N_s(t,s)ds-\int_x^\infty e^{i\frac{s^2}{4t}}\frac{2}{is\sqrt t}\,(\overline u_s)\left(\frac 1t,\frac st\right)T(t,s)\,e^{i\Phi}\,ds\right|$$
$$\leq \frac{C(a+\|u(1/t)\|_{H^1})\sqrt t}{x}+\frac{C(a+\|u(1/t)\|_{H^1})\|u(1/t)\|_{L^2}\sqrt t}{\sqrt x}+\left|\int_x^\infty e^{i\frac{s^2}{4t}}\frac{a^2}{is}\,\overline N\,e^{i\Phi}\,ds\right|.$$
By performing a last integration by parts we have
$$\int_x^\infty \frac{e^{i\frac{s^2}{4t}}}{s}\,\overline Ne^{i\Phi}\,ds=-e^{i\frac{x^2}{4t}}\frac{2t}{ix^2}\,\overline Ne^{i\Phi}-\int_x^\infty e^{i\frac{s^2}{4t}}\,\frac{2t}{i}\left(-\frac{2}{s^3}\,\overline N+\frac{ia^2}{s^3}\,\overline N+\frac{1}{s^2}\,\overline N_s\right)e^{i\Phi}.$$
From \eqref{2.2} we have $N_s=-\psi T$, so we get an upper bound of $|N_s|$  of the type $\frac{C(a+\|u(1/t)\|_{H^1})}{\sqrt t}$.
Hence we finally obtain
\begin{equation}\label{Eest0}
\left|\int_x^\infty \tilde N_s(t,s)\,ds-\int_x^\infty e^{i\frac{s^2}{4t}}\frac{2}{is\sqrt t}\,(\overline u_s)\left(\frac 1t,\frac st\right)T(t,s)\,e^{i\Phi}\,ds\right|
\end{equation}
$$\leq\frac{C(1+a^2)(a+\|u(1/t)\|_{H^1})\sqrt t}{x}+\frac{C(a+\|u(1/t)\|_{H^1})\|u(1/t)\|_{L^2}\sqrt t}{\sqrt x}+\frac{C\,(a^2+a^4)\,t}{x^2}.$$
By Cauchy-Schwarz inequality we deduce that  $\tilde N(t,x)$ has a limit $N^\infty(t)$ as $x$ goes to infinity and
the Lemma follows.
\end{proof}

\subsection{$N^\infty(t)$ is independent of time} 
\begin{lemma}\label{lemmaNinfty}
The function $N^\infty(t)$ is an independent function of time on $]0,1]$,
$$T=N^\infty(t)=N^\infty(1)=N^\infty.$$
\end{lemma}
\begin{proof}
Let $0<t\leq 1$, $1<x$. 
We shall use \eqref{2.4}: $\partial_{t'}N=-i\psi_x T-i\,\frac{a^2-t'|\psi|^2}{2t'}\, N$. We obtain
$$\int_t^1 \tilde N_{t'}(t',x)\,dt'=\int_t^1  \left(N_{t'} -\frac{ia^2}{2t'}\,N \right)e^{i\Phi}\,dt'=\int_t^1 \left(-i\psi_x T-i\,\frac{a^2-t'|\psi|^2}{2t'}\,  N-\frac{ia^2}{2t'} N\right)e^{i\Phi}\,dt'$$
$$=\int_t^1 \left((-i)\frac{e^{i\frac{x^2}{4t'}}}{2t'\sqrt {t'}}\,(2(\overline{ u}_x)+ix\,(a+\overline{u}))\left(\frac1{t'},\frac x{t'}\right) T-i\,\frac{a^2-t'|\psi|^2}{2t'}\,  N-\frac{ia^2}{2t'} \,N\right)e^{i\Phi}\,dt'.$$
 As in \S \ref{subsectionTindep}, in the term involving $T$ we perform integrations by parts in time relying on the oscillations of ${e^{i\frac{x^2}{4t}}}$ to obtain
$$|\tilde N(t',x)-\tilde N(1,x)|\leq \frac{C(u,a,t)}{x}+\left|\int_t^1 \left(-i\,\frac{a^2-t'|\psi|^2}{2t'}\,  N-\frac{ia^2}{2t'}\, N\right)e^{i\Phi}\,dt'\right.$$
$$+\left.\int_t^1 e^{i\frac{x^2}{4t'}}\left(\frac {2i}{t'\sqrt{t'}}\,(\overline{u}_x)\left(\frac1{t'},\frac x{t'}\right)T-\frac{2\sqrt {t'}}{x^2}\,(2(\overline{ u}_x)+ix\,(a+\overline{u}))\left(\frac1{t'},\frac x{t'}\right) T_{t'}\right)e^{i\Phi}\,dt'\right|,$$
with the constant $C(u,a,t)$ depending on $\|u\|_{L^\infty_{(1,1/t)}H^4}$, $a$ and $t$. 
In the integral involving $T$ we perform again an integration by parts in time, and we use expression \eqref{2.3}: $T_{t'}=\Im\overline{\psi_x}N$. A bound of type $\frac 1x$ follows also for this part. This is the case also for the last term except its part without an inverse power of $x$, corresponding to the differentiation of the phase in $\overline{\psi_x}$. We have then
$$\left|\tilde N(t',x)-\tilde N(1,x)\right|\leq \frac{C(u,a,t)}{x}+\left|\int_t^1 \left(-i\,\frac{a^2-t'|\psi|^2}{2t'}\, N-\frac{ia^2}{2t'} \,N\right)e^{i\Phi}dt'\right.$$
$$-\left.\int_t^1 e^{i\frac{x^2}{4t'}}\,2\sqrt {t'}i\,(a+\overline{u})\left(\frac1{t'},\frac x{t'}\right) \,\Im \left(\frac{e^{-i\frac{x^2}{4t'}}}{2t'\sqrt{t'}} \,(-i(a+u))\left(\frac1{t'},\frac x{t'}\right)N\right)e^{i\Phi}\,dt'\right|.$$
We recall that $-\frac{|\psi|^2}{2}+\frac{a^2}{2t}$ involves only powers of $u\left(\frac1{t'},\frac x{t'}\right)$, so we get
$$\left|\tilde N(t',x)-\tilde N(1,x)\right|\leq C(u,a,t)\left(\frac{1}{x}+\int_t^1\left|u\left(\frac 1{t'},\frac x{t'}\right)\right|\frac{dt'}{t'}\right)$$
$$+\left|-\int_t^1 \frac{ia^2}{2t'} \,N\,e^{i\Phi}\,dt'+\int_t^1 e^{i\frac{x^2}{4t'}}\,\frac{ia^2}{t'} \,\Re \left(e^{-i\frac{x^2}{4t'}}N\right)\,e^{i\Phi}\,dt'\right|.$$
$$\leq C(u,a,t)\left(\frac{1}{x}+\int_t^1\left|u\left(\frac 1{t'},\frac x{t'}\right)\right|\frac{dt'}{t'}\right)+\left|\int_t^1 e^{i\frac{x^2}{2t'}}\frac{ia^2}{2t'}\, \overline N\,e^{i\Phi}\,dt'\right|.$$
We perform a last integration by parts  of the oscillating function ${e^{i\frac{x^2}{4t}}}$ in the $\overline{N}-$term and use formula \eqref{2.4}: $N_t=-i\psi_xT-i\gamma N$. This way we obtain that this term has also the desired decay $\frac{C(u,a,t)}{x}$. In conclusion
$$\left|\tilde N(t',x)-\tilde N(1,x)\right|\leq C(u,a,t)\left(\frac{1}{x}+\int_t^1\left|u\left(\frac 1{t'},\frac x{t'}\right)\right|\frac{dt'}{t'}\right).$$

Like in Lemma B.1 of \cite{BV1} we can show that if $xu(1)$ and $x\partial_xu$ are in $L^2$ then these regularities are preserved, and the $L^2$ norms of $xu(t)$ and $x\partial_x u(t)$ are controlled by some polynomial growth in time. In particular we can estimate
$$\int_t^1\left|u\left(\frac 1{t'},\frac x{t'}\right)\right|\frac{dt'}{t'}\leq \frac1x\int_t^1\frac x{t'}\,\left|u\left(\frac 1{t'},\frac x{t'}\right)\right|dt'\leq \frac Cx\|xu_1\|_{L^2}^\frac 12\|x\partial_xu_1\|_{L^2}^\frac 12.$$
Therefore
$$\left|\tilde N(t',x)-\tilde N(1,x)\right|\leq \frac{C(u,a,t)}{x},$$
with the constant $C(u,a,t)$ depending on $\|u\|_{L^\infty_{(1,1/t)}H^4}$, $\|xu_1\|_{L^2}$, $\|x\partial_xu_1\|_{L^2}$, $a$ and $t$.
As in \S \ref{subsectionTindep} we conclude that $\tilde N^\infty(t)=\tilde N^\infty(1)=N^\infty$. 
\end{proof}
In particular, \eqref{Eestx} writes
\begin{equation}\label{Eestxindep}\left|\tilde N(t,x)-N^\infty\right|\leq C\|\partial_x u(1/t)\|_{L^2}\frac 1{\sqrt{x}}+\frac{C(1+a^2)(a+\|u(1/t)\|_{H^1})\sqrt t}{x}\end{equation}
$$+\frac{C(a+\|u(1/t)\|_{H^1})\|u(1/t)\|_{L^2}\sqrt t}{\sqrt x}+\frac{C\,(a^2+a^4)\,t}{x}.$$
and \eqref{Eest0} becomes
\begin{equation}\label{Eest}\left|\tilde N(t,x)-N^\infty-i\int_x^\infty e^{i\frac{s^2}{4t}}\frac{2}{s\sqrt t}\,(\overline u_s)\left(\frac 1t,\frac st\right)T(t,s)\,e^{i\Phi}\,ds\right|\leq \frac{C(1+a^2)(a+\|u(1/t)\|_{H^1})\sqrt t}{x}\end{equation}
$$+\frac{C(a+\|u(1/t)\|_{H^1})\|u(1/t)\|_{L^2}\sqrt t}{\sqrt x}+\frac{C\,(a^2+a^4)\,t}{x^2}.$$

Finally, recall that 
$$\tilde N(t,x)=N(t,x)e^{i\Phi(t,x)}=N(t,x)e^{-ia^2\log \frac{\sqrt t}{x}}$$ and we obtain from \eqref{Eestxindep} the second part of (i) in Theorem \ref{errortang}.

\section{The limit of $T(t,x)$ as $t$ goes to 0}
In the next subsection we prove two estimates on the function
$$h(t,s)= e^{-i\frac{s^2}{4t}}\frac{2}{s\sqrt t}\,(u_s)\left(\frac 1t,\frac st\right)e^{-i\Phi},$$
appearing in \eqref{Tangestbis} and \eqref{Eest},
that will allow us to prove the parts (ii)-(iii) of Theorem \ref{errortang} and to analyse $T(t,x)$ as $t$ goes to zero. In subsection \S \ref{subsection limit} we shall prove the existence of a limit of $T(t,x)$ and of $\tilde N(t,x)$ as $t$ goes to zero. 
\subsection{Two integral estimates}

\begin{lemma}\label{L1est} There exists $C>0$ such that for all $t$ small with respect to $u_1$ and $x$, we have
$$\int_x^\infty|h(t,s)|ds\leq  C(\|u_1\|_{X^\gamma_1}+\|\partial_xu_1\|_{X^\gamma_1})+C(a)(\|u_1\|_{X^\gamma_1}+\|\partial_xu_1\|_{X^\gamma_1}+\|xu_1\|_{L^2})\,\frac{t^\frac 14}{x}.$$
\end{lemma}
\begin{proof}
On the one hand, by Cauchy-Schwarz inequality, if $x\geq 1$,
$$\int_1^\infty \frac{2}{s\sqrt t}\left|(u_s)\left(\frac 1t,\frac {s} t\right)\right|\,ds\leq C\left\|\partial_xu\left(\frac 1t\right)\right\|_{L^2}.$$
On the other hand, if $x\leq 1$, we shall introduce the $J$ operator (see Appendix \ref{Jappendix})
$$\int_x^1 \frac{2}{s\sqrt t}\left|(u_s)\left(\frac 1t,\frac st\right)\right|\,ds=\int_\frac xt^\frac 1t \frac{2}{s\sqrt t}\left|(u_s)\left(\frac 1t,s\right)\right|\,ds$$
$$\leq \int_\frac xt^\frac 1t \frac{4\sqrt t}{s}\left(\left|(Ju)\left(\frac 1t,s\right)\right|+\left|s u\left(\frac 1t,s\right)\right|\right)\,ds,$$
so by Cauchy-Schwarz inequality,
$$\int_x^1 \frac{2}{s\sqrt t}\left|(u_s)\left(\frac 1t,\frac st\right)\right|\,ds\leq C\frac tx\left\|Ju\left(\frac 1t\right)\right\|_{L^2}+C\sqrt t\sqrt{\frac{1-x}{t}}\left\|u\left(\frac 1t\right)\right\|_{L^2}$$
$$\leq C\frac tx\left\|Ju\left(\frac 1t\right)\right\|_{L^2}+C\|u_1\|_{L^2}.$$
In Proposition \ref{Jnonlin} we prove that
$$\left\|Ju\left(\frac 1t\right)\right\|_{L^2}\leq C(a)(\|u_1\|_{X^\gamma_1}+\|\partial_xu_1\|_{X^\gamma_1}+\|xu_1\|_{L^2})\,\frac{1}{t^\frac 34},$$
so the Lemma follows.
\end{proof}

\begin{remark}Combining \eqref{Tangestbis} with Lemma \ref{L1est} we obtain that 
for all $x>0$ and $t$ small with respect to $u_1$ and $x$, 
$$|T(t,x)-T^\infty| \leq C(\|u_1\|_{X^\gamma_1}+\|\partial_xu_1\|_{X^\gamma_1})+C(a+\|u_1\|_{X^\gamma_1}+\|\partial_xu_1\|_{X^\gamma_1})\frac{\sqrt t}{x}$$
$$+C(a)(\|u_1\|_{X^\gamma_1}+\|\partial_xu_1\|_{X^\gamma_1}+\|xu_1\|_{L^2})\,\frac{t^\frac 14}{x},$$
and the part of (ii) in Theorem \ref{errortang} follows. 

In \cite{BV2} we have obtained $|\chi(t,x)-\chi(0,x)|\leq C\sqrt t$. As a consequence , for $x,\tilde x>0$ we get
$$|\chi(0,x)-\chi(0,\tilde x)-T^\infty(x-\tilde x)|\leq C\sqrt{t}+|\chi(t,x)-\chi(t,\tilde x)-T^\infty(x-\tilde x)|$$
$$\leq C\sqrt{t}+\left|\int_{\tilde x}^xT(t,s)-T^\infty\,ds\right| \leq C\,\sqrt{t}+\sup_{s\in[\tilde x,x]}|T(t,s)-T^\infty||x-\tilde x|,$$
so the part (iii) of Theorem \ref{errortang} also follows.
\end{remark}

\medskip
\begin{lemma}\label{ansatz} For all $g\in L^\infty$ with $g_s\in L^1$, $0<x\leq\tilde x$,
$$\left|\int_x^{\tilde x} h(t,s) g(s)\,ds-i\int_x^{\tilde x}\widehat{f_+}\left(\frac s2\right) g(s) \,\frac{ds }{s^{ia^2}}\right|$$
$$\leq C(\|u_1\|_{X^\gamma_1}+\|\partial_x u_1\|_{X^\gamma_1}+\|\partial_x^2u_1\|_{X^\gamma_1})(\|g\|_{L^\infty(x,\infty)}+\|g_s\|_{L^1(x,\infty)})\left(\frac{\sqrt{t}}{x}+t^{\frac 16^-}\right).$$
\end{lemma}
\begin{proof}
We obtain by the scattering result \eqref{scatt} applied for Sobolev spaces (see for instance Corollary 3.5 in \cite{BV1})
 $$\int_x^{\tilde x}\frac{1}{s\sqrt t}\left((u_s)\left(\frac 1t,\frac {s} t\right)-\sqrt t\,e^{-i\frac{a^2}{2}\log t}e^{i\frac{s^2}{4t}}\int e^{i\frac{y^2t}{4}}e^{-i\frac{sy}{2}}\,\partial_yf_+(y)\,dy\right) e^{-i\frac{s^2}{4t}}e^{-i\Phi}\,g(s)\,ds$$
$$=\int_x^{\tilde x}\frac{1}{s\sqrt t}\,(r_s)\left(\frac 1t,\frac {s} t\right)e^{-i\frac{s^2}{4t}}e^{-i\Phi}\,g(s)\,ds,$$
with
$$\left\| r\left(\frac 1t\right)\right\|_{H^1}\leq C(\|u_1\|_{X^\gamma_1}+\|\partial_x u_1\|_{X^\gamma_1})\,t^{\frac14^-}.$$
We shall first show that this remainder term can be upper-bounded as in the statement of the Lemma. 
For $x\geq 1$ by Cauchy-Schwarz,
$$\left|\int_x^{\tilde x}\frac{1}{s\sqrt t}\,(r_s)\left(\frac 1t,\frac {s} t\right)e^{-i\frac{s^2}{4t}}e^{-i\Phi}\,g(s)\,ds\right|\leq C(\|u_1\|_{X^\gamma_1}+\|\partial_x u_1\|_{X^\gamma_1})\,\|g\|_{L^\infty(x,\infty)}\,t^{\frac14^-}.$$
Then for $x\geq 1$ we need to treate only the case $\tilde x=1$, and we shall do this by integrating by parts
$$\int_x^{1}\frac{1}{s\sqrt t}\,(r_s)\left(\frac 1t,\frac {s} t\right)e^{-i\frac{s^2}{4t}}e^{-i\Phi}\,g(s)\,ds=\left[\frac{\sqrt t}{s}\,r\left(\frac 1t,\frac {s} t\right)e^{-i\frac{s^2}{4t}}e^{-i\Phi}\,g(s)\right]_x^1$$
$$+\int_x^1\frac{\sqrt t}{s^2}\,r\left(\frac 1t,\frac {s} t\right)e^{-i\frac{s^2}{4t}}e^{-i\Phi}\,g(s)\,ds+\int_x^1\frac{\sqrt t}{s}\,r\left(\frac 1t,\frac {s} t\right)\frac{is}{2t}e^{-i\frac{s^2}{4t}}e^{-i\Phi}\,g(s)\,ds$$
$$+\int_x^1\frac{\sqrt t}{s}\,r\left(\frac 1t,\frac {s} t\right)e^{-i\frac{s^2}{4t}}\frac{ia^2}{s}e^{-i\Phi}\,g(s)\,ds-\int_x^1\frac{\sqrt t}{s}\,r\left(\frac 1t,\frac {s} t\right)e^{-i\frac{s^2}{4t}}e^{-i\Phi}\,g_s(s)\,ds.$$
By a simple integration and  Cauchy-Schwarz we obtain
$$\left|\int_x^{1}\frac{1}{s\sqrt t}\,(r_s)\left(\frac 1t,\frac {s} t\right)e^{-i\frac{s^2}{4t}}e^{-i\Phi}\,g(s)\,ds\right|\leq C\frac{\sqrt {t}}{x}\left\| r\left(\frac 1t\right)\right\|_{L^\infty}\|g\|_{L^\infty}$$
$$+C\left\| r\left(\frac 1t\right)\right\|_{L^2}\|g\|_{L^\infty}+C\frac {\sqrt t}x\left\| r\left(\frac 1t\right)\right\|_{L^2}\|g_s\|_{L^1}$$
$$\leq C(\|u_1\|_{X^\gamma_1}+\|\partial_x u_1\|_{X^\gamma_1})(\|g\|_{L^\infty(x,\infty)}+\|g_s\|_{L^1(x,\infty)})\left(\frac{\sqrt{t}}{x}+t^{\frac 14^-}\right).$$
In conclusion
$$\left|\int_x^{\tilde x} h(t,s)g(s)\,ds-\int_x^{\tilde x} \int e^{i\frac{y^2t}{4}}e^{-i\frac{sy}{2}}\,\partial_yf_+(y)\,dy\,\frac{2g(s)}{s^{1+ia^2}}\,ds\right|$$
$$\leq C(\|u_1\|_{X^\gamma_1}+\|\partial_x u_1\|_{X^\gamma_1})(\|g\|_{L^\infty(x,\infty)}+\|g_s\|_{L^1(x,\infty)})\left(\frac{\sqrt{t}}{x}+t^{\frac 14^-}\right).$$
Since
$$\int e^{i\frac{y^2t}{4}}e^{-i\frac{sy}{2}}\,\partial_yf_+(y)\,dy=i\frac s2\,\widehat{f_+}\left(\frac s2\right)+\int \left(e^{i\frac{y^2t}{4}}-1\right)e^{-i\frac{sy}{2}}\,\partial_yf_+(y)\,dy,$$
it follows that in order to obtain the Lemma it is enough to estimate
$$\int_x^{\tilde x} \int \left(e^{i\frac{y^2t}{4}}-1\right)e^{-i\frac{sy}{2}}\,\partial_yf_+(y)\,dy\,\frac{2g(s)}{s^{1+ia^2}}\,ds$$
$$=\int \mathcal F\left(\left(e^{i\frac{y^2t}{4}}-1\right)\,\partial_yf_+(y)\right)\left(\frac s2\right) \frac{2g(s)\,\mathbb{I}_{(x,\tilde x)}(s)}{s^{1+ia^2}}\,ds$$
$$=\int \left(e^{i\frac{y^2t}{4}}-1\right)\partial_yf_+(y) \int_x^{\tilde x}  \frac{e^{-i\frac{sy}{2}}g(s)}{s^{1+ia^2}}\,ds\,dy=I(x,\tilde x).$$
In the last equality we have used Parseval identity. In all the following the dependence on $u_1$ will come from $\|f_+\|_{H^1
2}$ only, so the dependence of the constants on $u_1$ will be only in terms of $\|\partial_x^ku_1\|_{X^\gamma_1}$ for $0\leq k\leq 2$..\\

We shall need some estimates for $y\neq 0$. One has
$$\int_x^{\tilde x}  \frac{e^{-i\frac{sy}{2}}\,g(s)}{s^{1+ia^2}}ds=\left.\frac{2e^{-i\frac{sy}{2}}\,g(s)}{-iy\,s^{1+ia^2}}\right|_x^{\tilde x}-\int_x^{\tilde x} \frac{2e^{-i\frac{sy}{2}}}{-iy}\left(-\frac{(1+ia^2)\,g(s)}{s^{2+ia^2}}+\frac{g_s(s)}{s^{1+ia^2}}\right)ds,$$
so
\begin{equation}\label{firstest}\left|\int_x^{\tilde x}  \frac{e^{-i\frac{sy}{2}}\,g(s)}{s^{1+ia^2}}\,ds\right|\leq C\frac {\|g\|_{L^\infty}}{|xy|}+C\frac{\|g_s\|_{L^1}}{|y|x}\leq C(g)\,\frac {1}{|y|x},\end{equation}
with
$$C(g)=C(\|g\|_{L^\infty}+\|g_s\|_{L^1}).$$
Also for all $\alpha>0$ and $|y|\geq 1$,
\begin{equation}\label{secondest}\left|\int_x^1 \frac{e^{-i\frac{sy}{2}}\,g(s)}{s^{ia^2}}\,ds\right|\leq C(g)\,C(\alpha)\,\frac {1}{|x|^\alpha|y|}.\end{equation}
Indeed, by integrating by parts
$$\left|\int_x^1 \frac{e^{-i\frac{sy}{2}}\,g(s)}{s^{ia^2}}\,ds\right|\leq C\left(\frac{\|g\|_{L^\infty}}{|y|}+a^2\frac{|\log x|\|g\|_{L^\infty}}{|y|}+\frac{\|g_s\|_{L^1}}{|y|}\right)$$
$$\leq C(\alpha)\,(\|g\|_{L^\infty}+\|g_s\|_{L^1})\,\frac {1}{|x|^\alpha|y|}.$$
It is enough to treat $I(x,1)$ and $I(1,\tilde x)$ for all $0<x\leq 1\leq \tilde x$. 
From \eqref{firstest} and by Cauchy-Schwarz inequality we get
$$|I(1,\tilde x)|\leq C(g)\left(\int_{|y|\leq \frac {1}{\sqrt t}} \left|\frac{e^{i\frac{y^2t}{4}}-1}{y}\right||\partial_yf_+(y)|\,dy+\int_{|y|\geq \frac {1}{\sqrt t}}\left|e^{i\frac{y^2t}{4}}-1\right|\left|\frac{\partial_yf_+(y)}{y}\right|\,dy\right)$$
$$\leq  C(g)\left(\int_{|y|\leq \frac {1}{\sqrt t}} t\,|y\,\partial_yf_+(y)|\,dy+\int_{|y|\geq \frac {1}{\sqrt t}}\left|\frac{\partial_yf_+(y)}{y}\right|\,dy\right)\leq C(g)\,\|\partial_y f_+\|_{L^2}\,t^\frac 14.$$
For treating $I(x,1)$ we need to introduce a cutoff function $\eta(t|y|)$ such that $\eta(r)=1$ for $|r|\leq 1$ and $\eta(r)=0$ for $|r|\geq 2$. On one hand by \eqref{firstest} and by Cauchy-Schwarz inequality
$$\left|\int \left(e^{i\frac{y^2t}{4}}-1\right)\partial_yf_+(y)(1-\eta(t|y|)) \int_x^1 \frac{e^{-i\frac{sy}{2}}\,g(s)}{s^{1+ia^2}}\,ds\,dy\right|$$
$$\leq\int_{\frac 1t\leq |y|}\frac{C(g)}{x|y|}\,|\partial_yf_+(y)|\,dy\leq C(g)\,\|\partial_y f_+\|_{L^2}\,\frac{\sqrt{t}}{x}.$$
On the remaining part of $I(x,1)$ we shall perform an integration by parts
$$\int \left(e^{i\frac{y^2t}{4}}-1\right)\partial_yf_+(y)\,\eta(t|y|) \int_x^1 \frac{e^{-i\frac{sy}{2}}\,g(s)}{s^{1+ia^2}}\,ds\,dy$$
$$=-\int \frac{iyt}{2}e^{i\frac{y^2t}{4}}f_+(y)\,\eta(t|y|) \int_x^1 \frac{e^{-i\frac{sy}{2}}\,g(s)}{s^{1+ia^2}}\,ds\,dy$$
$$-\int \left(e^{i\frac{y^2t}{4}}-1\right)f_+(y)\,\eta_y(t|y|) \int_x^1 \frac{e^{-i\frac{sy}{2}}\,g(s)}{s^{1+ia^2}}\,ds\,dy$$
$$+\int \left(e^{i\frac{y^2t}{4}}-1\right)f_+(y)\,\eta(t|y|) \int_x^1 \frac{ie^{-i\frac{sy}{2}}\,g(s)}{2s^{ia^2}}\,ds\,dy=I_1+I_2+I_3.$$
For $I_1$ and $I_2$ we use \eqref{firstest} and Cauchy-Schwarz inequality
$$|I_1+I_2|\leq \int_{|y|\leq \frac 2t}\frac{C(g) \,t}{x}\, |f_+(y)| \,dy+\int_{\frac 1t\leq |y|\leq \frac 2t}\frac{C(g)\, t}{x|y|}\, |f_+(y)| \,dy\leq C(g)\,\|\partial_y f_+\|_{L^2}\,\frac{\sqrt{t}}{x}.$$
For $0<\alpha<\beta<1$ we split integral $I_3$ into two regions, $|y|\leq t^{-\beta}$ and $t^{-\beta}\leq |y|\leq \frac 2t$. On the first region we upper-bound the integral in $s$ simply by $C\|g\|_{L^\infty}$ and on the other region we use \eqref{secondest} with $\alpha>0$
$$|I_3|\leq C(g)\int_{|y|\leq t^{-\beta}} y^2\,t\,|f_+(y)| \,dy+\int_{t^{-\beta}\leq |y|\leq \frac 2t} |f_+(y)|\,\frac{C(\alpha)\,C(g)}{|x|^\alpha|y|}\,dy.$$
By Cauchy-Schwarz inequality
$$|I_3|\leq C(g)\,\|\partial_y f_+\|_{L^2}\,t^{1-\frac52\beta}+C(\alpha)\,C(g)\,\|\partial_y f_+\|_{L^2}\,\frac{t^\frac\beta 2}{|x|^\alpha}$$
$$\leq (1+C(\alpha))\,C(g)\,\|\partial_y f_+\|_{L^2}\,\left(t^{1-\frac52\beta}+t^{\frac{\beta-\alpha}{2}}\left(\frac{\sqrt{t}}{x}\right)^\alpha\right).$$
We take $\beta=\frac 13$ and $0<\alpha<\frac 13$ so
$$|I_3|\leq\,C(g)\,\|\partial_y f_+\|_{L^2}\,\left(t^\frac16
+t^{\frac 1{6}^-}\left(\frac{\sqrt{t}}{x}\right)^\alpha\right),$$
and the proof of the Lemma is complete.

\end{proof}

\subsection{The existence and properties of $T(0,x)$}\label{subsection limit}

Fix $x>0$ and let $0<t\leq 1$. Let us recall the results of  \eqref{Tangestbis}, \eqref{Eest}, Lemma \ref{L1est} and Lemma \ref{ansatz}:
\begin{equation}\label{Tangestbissimple}
\left|T(t,x)-T^\infty+\Im\int_x^\infty h(t,s)\tilde N (t,s)ds\right|\leq C_1\frac{\sqrt t}{x},
\end{equation}
\begin{equation}\label{Eestsimple}\left|\tilde N(t,x)-N^\infty-i\int_x^\infty \overline{h(t,s)}\,T(t,s)\,ds\right|\leq C_2\left(\frac{\sqrt t}{x}+\frac{t}{x^2}+\sqrt t\right), \end{equation}
\begin{equation}\label{estL1simple}\int_x^\infty|h(t,s)|ds\leq C_3 +C_4\,\frac{t^\frac 14}{x}, \end{equation}
\begin{equation}\label{ansatzsimple}\left|\int_x^{\tilde x} (h(t,s)-\tilde h(s)) \,g(s)\,ds\right|\leq C_5(\|g\|_{L^\infty(x,\infty)}+\|g_s\|_{L^1(x,\infty)})\left(\frac{\sqrt{t}}{x}+t^{\frac 16^-}\right),\end{equation}
with
$$C_1=C(a+\|u_1\|_{X^\gamma}+\|\partial_xu_1\|_{X^\gamma_1}),$$
$$\quad C_2=C(1+a^2)(a+\|u_1\|_{X^\gamma}+\|\partial_xu_1\|_{X^\gamma_1})+C(a+\|u_1\|_{X^\gamma}+\|\partial_xu_1\|_{X^\gamma_1})\|u_1\|_{X^\gamma_1}+C\,(a^2+a^4),$$
$$C_3=C(\|u_1\|_{X^\gamma_1}+\|\partial_xu_1\|_{X^\gamma_1}),\quad C_4=C(a)(\|u_1\|_{X^\gamma_1}+\|\partial_xu_1\|_{X^\gamma_1}+\|xu_1\|_{L^2}),$$
$$C_5=C(\|u_1\|_{X^\gamma_1}+\|\partial_x u_1\|_{X^\gamma_1}+\|\partial_x^2u_1\|_{X^\gamma_1}),$$
and
$$\tilde{h}(s)=i\widehat{f_+}\left(\frac s2\right)  \,\frac{1}{s^{ia^2}}.$$

\begin{lemma}\label{selfsimdecay} The following estimate holds, for $x>0$ and $0<t\leq 1$:
\begin{equation}\label{Tangestter}\left|T(t,x)-T^\infty +\Im N^\infty \int_x^\infty h(t,s)ds+\Re\int_x^\infty h(t,s)\int_s^\infty \overline{h(t,s')}T(t,s')ds'ds\right|\leq C_7(t)\end{equation}
with
$$C_7(t)= C_1\frac{\sqrt t}{x}+C\|\partial_s u(1/t)\|_{L^2}\,C_2\left(\frac{\sqrt t}{x}+\frac{t}{x^2}+\sqrt t\right)$$
$$+C\|u(1/t)\|_{H^1}\left( \left(1+\frac t{x^2}\right)C_2\left(\frac{\sqrt t}{x}+\frac{t}{x^2}+\sqrt t\right)+C_6\left(\frac{\sqrt{t}}{x}+\frac t{x^2}+\frac{t\sqrt{t}}{x^3}\right)\right),$$
and
$$C_6=a+a^2+\|u(1/t)\|_{L^\infty}+\|u(1/t)\|_{L^\infty}^2+\|\partial_xu(1/t)\|_{L^2}.$$
A similar upper bound in terms of positive powers of $\frac{\sqrt{t}}{x}$ holds also for $\tilde N$.
\end{lemma}

\begin{proof}
Combining \eqref{Tangestbissimple}, \eqref{Eestsimple} and \eqref{estL1simple} we obtain
$$\left|T(t,x)-T^\infty +\Im N^\infty \int_x^\infty h(t,s)ds+\Re\int_x^\infty h(t,s)\int_s^\infty \overline{h(t,s')}T(t,s')ds'ds\right|$$
$$\leq C_1\frac{\sqrt t}{x}+\left|\int_x^\infty h(t,s)\,d_0(t,s)\,ds\right|,$$
where
$$d_0(t,x)=\tilde N(t,x)-N^\infty-i\int_x^\infty \overline{h(t,s)}\,T(t,s)\,ds$$
$$=-\frac{2 t}{ix}\,\psi T\,e^{i\Phi}+\int_x^\infty \frac{2t}{is^2}\,\psi Te^{i\Phi}-\frac{2t}{is}\,\psi \,e^{i\Phi}\,\Re \overline{\psi} N+a^2\frac{2t}{s^2}\,\psi T\,e^{i\Phi}-\frac{ia^2}{s}Ne^{i\Phi}\,ds.$$
The expression of $d_0(t,x)$ is obtained in the proof of Lemma \ref{lemmaNasy}, and it was proved that (see for example \eqref{Eestsimple})
\begin{equation}\label{r0infty}
|d_0(t,x)|\leq C_2\left(\frac{\sqrt t}{x}+\frac{t}{x^2}+\sqrt t\right).
\end{equation}
In view of the expression on $d_0$ and $\psi$ we also infer that
\begin{equation}\label{r0l2}
\frac tx\,\left\|\partial_sd_0(t)\right\|_{L^2(\min\{x,1\},1)}\leq C_6\left(\frac{\sqrt{t}}{x}+\frac t{x^2}+\frac{t\sqrt{t}}{x^3}\right),
\end{equation}
for
$$C_6=(a+a^2+\|u(1/t)\|_{L^\infty}+\|u(1/t)\|_{L^\infty}^2+\|\partial_xu(1/t)\|_{L^2}).$$
For $x\geq 1$ by applying Cauchy-Schwarz we get
$$\left|\int_x^\infty h(t,s)\,d_0(t,s)\,ds\right|\leq C\|\partial_s u(1/t)\|_{L^2}\,C_2\left(\frac{\sqrt t}{x}+\frac{t}{x^2}+\sqrt t\right).$$
For $x\leq 1$ we split the integral from $x$ to $1$ and from $1$ to $\infty$, and we perform an integration by parts on $[x,1]$,
$$\int_x^\infty h(t,s)\,d_0(t,s)ds=\int_1^{\infty} h(t,s)\,d_0(t,s) ds+2\sqrt{t}e^{-i\frac{1}{4t}-i\Phi(t,1)}\,u\left(\frac 1t,\frac 1t\right)\,d_0(t,1) $$
$$-\frac{2\sqrt{t}}{x}e^{-i\frac{x^2}{4t}-i\Phi(t,x)}\,u\left(\frac 1t,\frac xt\right)\,d_0(t,x)-\int_x^1u\left(\frac 1t,\frac st\right)\left(\frac{2\sqrt{t}}{s}e^{-i\frac{s^2}{4t}-i\Phi(t,s)}\,d_0(t,s)\right)_s.$$
We use Cauchy-Schwarz and the fact that $u$ belongs to $H^1$ to get
$$
\left|\int_x^\infty h(t,s)\,d_0(t,s)ds\right|\leq C\|u(1/t)\|_{H^1}\left( \left(1+\frac t{x^2}\right)\|d_0(t)\|_{L^\infty(x,\infty)}+\frac tx\,\|\partial_sd_0(t)\|_{L^2(x,1)}\right),
$$
and in view of \eqref{r0l2} the Lemma follows.
Note that this way we have obtained for any $f$ and any $x\neq 0$ the estimate
\begin{equation}\label{main}
\left|\int_x^\infty h(t,s)\,f(t,s)ds\right|\leq C\|u(1/t)\|_{H^1}\left( \left(1+\frac t{x^2}\right)\|f(t)\|_{L^\infty(x,\infty)}+\frac tx\,\|\partial_sf(t)\|_{L^2(x,1)}\right).
\end{equation}
A similar upper bound for $\tilde N$ follows the same by noting that
$$c_0(t,x)=T(t,x)-T^\infty+\Im\int_x^\infty h(t,s)\tilde N(t,s)\,ds$$
$$=-\Re \frac{2 t}{-ix}\,\overline{\psi} N-\Re \int_x^\infty \frac{2t}{is^2}\,\overline{\psi} N+\frac{2t}{-is}\,|\psi|^2 T\,ds,$$
also satisfies
$$\frac tx\,\left\|\partial_sc_0(t)\right\|_{L^2(\min\{x,1\},1)}\leq C_6\left(\frac{\sqrt{t}}{x}+\frac t{x^2}+\frac{t\sqrt{t}}{x^3}\right).$$
\end{proof}

\begin{lemma}\label{lemmaTangsum} 
There exists a constant $C>0$ such that for all $n\in\mathbb N^*$ and $x\neq0$ there exists $a_1(x),...,a_{2n}(x)$ and $R_n(t,x)$ for which the following decomposition holds
\begin{equation}\label{Tangsum}
T(t,x)=\sum_{j=1}^{2n}a_j(x) +R_n(t,x)\end{equation}
	$$+(-1)^n\,\Re\int_x^\infty h(t,s_1)\int_{s_1}^\infty \overline{h(t,s_2)}...\Re\int_{s_{2n-2}}^\infty h(t,s_{2n-1})\int_{s_{2n-1}}^\infty  \overline{h(t,s_{2n})} T(t,s_{2n})\,ds_{2n}...ds_1,$$
with
$$|a_j(x)|\leq C^{j-1} (\|u_1\|_{X^\gamma}+\|\partial_xu_1\|_{X^\gamma_1})^{j-1},$$ and, provided that $\partial_s^ku_1$ are small with respect to $1$ in $X^\gamma_1$ for $0\leq k\leq 2$,
$$R_n(t,x)=O(t^{\frac 16^-}),$$
uniformly in $n$.	
\end{lemma}
\begin{proof}
We prove the Lemma by recursion on $n$. 
We first notice that
$$\|\tilde{h}\|_{L^1}\leq \|\widehat{f_+}\|_{L^1}\leq C\|f_+\|_{H^1}\leq C(\|u_1\|_{X^\gamma_1}+\|\partial_xu_1\|_{X^\gamma_1}).$$
Combining \eqref{Tangestter} with \eqref{ansatzsimple} for $g(s)=1$ and \eqref{estL1simple} we obtain the result for $n=1$ with 
$$a_1(x)=T^\infty\,\,\,,\,\,\,a_2(x)=-\Im N^\infty \int_x^\infty\tilde h(s)\,ds,$$
$$|R_1(t,x)|\leq C_7(t)+C_5\left(\frac{\sqrt{t}}{x}+t^{\frac 16^-}\right).$$
We suppose the result true for $n$ and we shall prove it for $n+1$. By replacing in  \eqref{Tangsum} the tangent $T$ in the integral by its ansatz from \eqref{Tangestter},
$$T(t,x)=\sum_{j=1}^{2n}a_j(x)+R_n(t,x)$$
$$+(-1)^n\Re\int_x^\infty h(t,s_1)\int_{s_1}^\infty \overline{h(t,s_2)}...\times$$
$$\times\Re\int_{s_{2n-2}}^\infty h(t,s_{2n-1})\int_{s_{2n-1}}^\infty  \overline{h(t,s_{2n})}\left(T^\infty +\Im N^\infty\int_{s_{2n}}^\infty h(t,s_{2n+1})\,ds_{2n+1}\right)\,ds_{2n}...ds_1$$
$$+(-1)^{n+1}\Re\int_x^\infty h(t,s_1)\int_{s_1}^\infty \overline{h(t,s_2)}...\times$$
$$\times\Re \int_{s_{2n}}^\infty h(t,s_{2n+1})\int_{s_{2n+1}}^\infty\overline{h(t,s_{2n+2})}T(t,s_{2n+2})\,ds_{2n+2}...ds_1+r_{n+1}(t,x),$$
with
$$|r_{n+1}(t,x)|\leq C_7(t)\left(C_3 +C_4\,\frac{t^\frac 14}{x}\right)^{2n}$$
Since  $\tilde h$ is an $L^1$ function, and since \eqref{estL1simple} yields $h\in L^1$, we can apply \eqref{ansatzsimple} in the iterated integrals as many times as needed to  replace everywhere $h$ by $i\tilde{h}$. We gather the difference terms with $R_n(t,x)$ and obtain $R_{n+1}(t,x)$. This way we get the result for $n+1$ with $a_{2n+1}(x)$ given by
$$(-1)^n\Re\int_x^\infty \tilde{h}(s_1)\int_{s_1}^\infty \overline{\tilde{h}(s_2)}...\Re\int_{s_{2n-2}}^\infty \tilde{h}(s_{2n-1})\int_{s_{2n-1}}^\infty  \overline{\tilde{h}(s_{2n})}T^\infty \,ds_{2n}...ds_1,$$
and with $a_{2n+2}(x)$ given by
$$(-1)^{n+1}\Re\int_x^\infty \tilde{h}(s_1)\int_{s_1}^\infty \overline{\tilde{h}(s_2)}...\times$$
$$\times\Re\int_{s_{2n-2}}^\infty \tilde{h}(s_{2n-1})\int_{s_{2n-1}}^\infty  \overline{\tilde{h}(s_{2n})} \Im N^\infty\int_{s_{2n}}^\infty \tilde{h}(s_{2n+1})\,ds_{2n+1}...ds_1,$$
and
\begin{equation}\label{error}
|R_{n+1}(t,x)|\leq C_7(t)\sum_{k=1}^{n}\left(C_3 +C_4\,\frac{t^\frac 14}{x}\right)^{2k}
\end{equation}
$$+2C_5\left(\frac{\sqrt{t}}{x}+t^{\frac 16^-}\right)\sum_{j=1}^{2n+2}\sum_{k=0}^{j-1}\|\widehat{f_+}\|_{L^1}^k\left(C_3 +C_4\,\frac{t^\frac 14}{x}\right)^{j-1-k}.$$
Finally, for $j\geq 1$,
$$|a_j(x)|\leq \|\widehat{f_+}\|_{L^1}^{j-1}\leq (C\|f_+\|_{H^1})^{j-1}\leq C^{j-1}(\|u_1\|_{X^\gamma_1}+\|\partial_xu_1\|_{X^\gamma_1})^{j-1},$$
and for fixed $x$
$$R_{n}(t,x)=O(t^{\frac 16^-}),$$
provided that $C_3$ and $\|\widehat{f_+}\|_{L^1}$ are small with respect to $1$, so the Lemma follows.
\end{proof}

We shall prove now that there is a limit for $T(t,x)$  and for $\tilde N(t,x)$ as $t$ goes to zero. From the following Proposition the part (iv) of Theorem \ref{errortang} follows. 

\begin{prop}The tangent and the normal vectors $T(t,x)$ and $\tilde N(t,x)$ have a limit at time $t=0$ for $x\neq 0$, and
\begin{equation}\label{Tanglimit}
|T(t,x)-T(0,x)|+|\tilde N(t,x)-\tilde N(0,x)|=O(t^{\frac 16^-}).
\end{equation}
The traces at time $t=0$ have a limit as $x$ goes to infinity, and
\begin{equation}\label{Tanglimitx}
\lim_{x\rightarrow\infty} T(0,x)=T^\infty, \quad \lim_{x\rightarrow\infty} \tilde N(0,x)=N^\infty.
\end{equation}
Moreover, for all $0\leq t\leq 1$ and $x\neq 0$ we have the self similar decay
\begin{equation}\label{ssdecay}
|T(t,x)-T(0,x)|+|\tilde N(t,x)-\tilde N(0,x)|\leq C_8(t,x)\left(1+t^{\frac 16^-}\right),
\end{equation}
with 
$C_8(t,x)$ a linear combination of $\left(\frac{\sqrt {t}}{x}\right)^k$, $1\leq k\leq 4$, with coefficients linear combinations of powers of $\|\partial_s^ju(1)\|_{X^\gamma_1}$, $0\leq j\leq 2$ .
\end{prop}

\begin{proof}
We first notice that for $\partial_s^ku_1$ small enough with respect to $1$ in $X^\gamma_1$ for $0\leq k\leq 1$, we have 
$$\sum_{j=1}^{\infty}|a_j(x)|<\infty.$$
From \eqref{estL1simple} it follows that
$$\left|\Re\int_x^\infty h(t,s_1)\int_{s_1}^\infty \overline{h(t,s_2)}...\Re\int_{s_{2n-2}}^\infty h(t,s_{2n-1})\int_{s_{2n-1}}^\infty  \overline{h(t,s_{2n})} T(t,s_{2n})\,ds_{2n}...ds_1\right|$$
$$\leq \left(C_3 +C_4\,\frac{t^\frac 14}{x}\right)^{2n},$$
and we obtained in Lemma \ref{lemmaTangsum} that
$$\sum_{j=2n}^{\infty}|a_j(x)|\leq\sum_{j=2n}^{\infty}C^{j-1}(\|u_1\|_{X^\gamma_1}+\|\partial_xu_1\|_{X^\gamma_1})^{j-1}.$$
Since $\sum |a_j(x)|$ is finite we can choose $n_t\in\mathbb N$ large enough such that 
$$\sum_{j=2n_t}^{\infty}C^{j-1}(\|u_1\|_{X^\gamma_1}+\|\partial_xu_1\|_{X^\gamma_1})^{j-1}\leq t^\frac 16.$$
By Lemma \ref{lemmaTangsum} we conclude 
$$\left| T(t,x)-\sum_{j=1}^{\infty}a_j(x)\right|=O(t^{\frac 16^-}),$$
and in particular $T(t,x)$ has a limit at $t=0$, 
$$T(0,x)=\sum_{j=1}^{\infty}a_j(x),$$
with the decay in \eqref{Tanglimit}.
We notice that in view of the expression of $a_j(x)$ and of the fact that $\|\widehat {f_+}\|_{L^1}\leq C\|u_1\|_{H^1}<1$, we obtain $T(0)\in L^\infty$  and $T_s(0)\in L^2(\mathbb R\setminus\{0\})$. 
Finally, from Lemma \ref{lemmaTangestx} and Lemma \ref{lemmaTanginfty} we conclude that $T(0,x)$ has a limit as $x$ goes to infinity, and
$$\lim_{x\rightarrow\infty} T(0,x)=T^\infty.$$
Now we focus on $\tilde N(t,x)$ as $t$ goes to zero. 
Estimates \eqref{Tangestbissimple} and \eqref{Eestsimple} allows us to write for $\tilde N$ the estimate 
\begin{equation}\label{Eestter}\left|\tilde N(t,x)-N^\infty +iT^\infty \int_x^\infty \overline{h(t,s)}ds+i\int_x^\infty \overline{h(t,s)}\Im\int_s^\infty h(t,s')\tilde N(t,s')ds'ds\right|\end{equation}
$$\leq C_2\left(\frac{\sqrt t}{x}+\frac{t}{x^2}+\sqrt t\right)+C_1\frac{\sqrt t}{x}\left(C_3 +C_4\,\frac{t^\frac 14}{x}\right).$$
Arguing as above for $T$ we obtain a limit for $\tilde N(t,x)$ for $x>0$ as $t$ goes to zero, with
\begin{equation}\label{Elimit}
|\tilde N(t,x)-\tilde N(0,x)|=O(t^{\frac 16^-}).
\end{equation}
Also, \eqref{Eestx} combined with \eqref{Elimit} implies that $\tilde N(0,x)$ has a limit as $x$ goes to infinity, and
$$\lim_{x\rightarrow\infty} \tilde N(0,x)=N^\infty.$$

Finally, we note that \eqref{estL1simple}, \eqref{ansatzsimple} \eqref{Tangestter} and \eqref{Tanglimit} imply that $T(0,x)$ solves the integral equation
\begin{equation}\label{inteq}
T(0,x)-T^\infty +\Im N^\infty \int_x^\infty \tilde h(s)\,ds+\Re\int_x^\infty \tilde h(s)\int_s^\infty \overline{\tilde h(s')}\,T(0,s')ds'ds=0.\end{equation}
Gathering \eqref{Tanglimit} and  \eqref{inteq} we obtain
$$
|T(t,x)-T(0,x)|\leq C_7(t)+\left| \int_x^\infty h(t,s)-\tilde h(s)\,ds\right|+\left|\int_x^\infty  h(t,s)\int_s^\infty \overline{h(t,s')}\,(T(t,s')-T(0,s'))ds'ds\right|$$
$$+\left|\int_x^\infty h(t,s)\int_s^\infty \overline{h(t,s')-\tilde h(s')}\,T(0,s')ds'ds\right|+\left|\int_x^\infty (h(t,s)-\tilde h(s))\int_s^\infty \overline{\tilde h(s')}\,T(0,s')ds'ds\right|.$$
We use \eqref{ansatzsimple} to estimate the first and the last integral, two times \eqref{ansatzsimple} for the second integral, and  \eqref{main} then \eqref{ansatzsimple} to estimate the third integral
$$|T(t,x)-T(0,x)|\leq C_7(t)+C_5(1+\|\tilde h\|_{L^1})\left(\frac{\sqrt{t}}{x}+t^{\frac 16^-}\right)$$
$$+C\|u(1/t)\|_{H^1}\left( \left(1+\frac t{x^2}\right)\left\|\int_s^\infty \overline{h(t,s')}\,(T(t,s')-T(0,s'))ds'\right\|_{L^\infty(x,\infty)}+\frac tx\,\|\overline{h}(t)(T(t)-T(0))\|_{L^2(x,1)}\right)$$
$$+C\|u(1/t)\|_{H^1}\left( \left(1+\frac t{x^2}\right)\left\|\int_s^\infty \overline{h(t,s')-\tilde h(s')}\,T(0,s')ds'\right\|_{L^\infty(x,\infty)}+\frac tx\,\|\overline{(h(t)-\tilde h)}\,T(0)\|_{L^2(x,1)}\right)$$
$$\leq C_7(t)+C_5(1+\|\tilde h\|_{L^1})\left(\frac{\sqrt{t}}{x}+t^{\frac 16^-}\right)$$
$$+C^2\|u(1/t)\|^2_{H^1}\left(1+\frac t{x^2}\right)\left( \left(1+\frac t{x^2}\right)\|T(t)-T(0)\|_{L^\infty(x,\infty)}+\frac tx\,\|\partial_s(T(t)-T(0))\|_{L^2(x,1)}\right)$$
$$+C_8\frac {t}{x^2}+C\|u(1/t)\|_{H^1}\left(1+\frac t{x^2}\right)C_5\left(\|T(0)\|_{L^\infty(x,\infty)}+\|\partial_sT(0)\|_{L^1(x,\infty)}\right)\left(\frac{\sqrt t}{x}+t^{\frac 16^-}\right)$$
$$+C\|u(1/t)\|_{H^1}\frac tx\,\|h(t)-\tilde h\|_{L^2(x,1)}.$$
We recall that $|T|=1$, that $\partial_s T=\Re\overline\psi N$ and we notice that from \eqref{inteq},
$$\partial_s T(0)=\Im N^\infty\tilde h(x)+\Re\tilde h(x)\int_x^\infty \overline{\tilde h(s)}\,T(0,s)ds.$$
so we have obtained a self similar bound $C_8(t,x)$. The analysis for $\tilde N$ is the same as for $T$.

\end{proof}

\section{The selfsimilar structure}
In this last section we show that the self similar structure is preserved at singularity time $t=0$, in the sense of the statement (v) of Theorem \ref{errortang}.

\begin{prop}The functions $T(0,x)$ and $\tilde N(0,x)$ admit limits on the right and on the left of $x=0$, and their values are, modulo a rotation,
$$T(0,0^\pm)=A_a^{\pm}\quad,\quad \tilde N(0,0^\pm)=B_a^{\pm}.$$
In particular we recover at time zero the angle of the self-similar solution.
\end{prop}

\begin{proof}
Let $t_n$ be a sequence of times that tend to zero, such that $\|u(1/t_n)\|_{L^\infty}$ tends to zero. This is possible since $u\in L^4((1,\infty),L^\infty)$. 
We denote
$$T_n(s)=T(t_n,\sqrt{t_n}\,s)\quad,\quad N_n(s)=N(t_n,\sqrt{t_n}\,s).$$
It follows that
$$T_n'(s)=\sqrt{t_n}\,\Re\left(\overline{\psi}(t_n,\sqrt{t_n}\,s)\,N_n(s)\right)=\Re \left(ae^{i\frac{s^2}{4}}N_n(s)\right)+o(t_n)N_n(s),$$
$$N_n'(s)=-\sqrt{t_n}\,\psi(t_n,\sqrt{t_n}\,s)\,T_n(s)=-ae^{i\frac{s^2}{4}}T_n(s)+o(t_n)T_n(s).$$
Let us recall that $T$ and $N$ are bounded by $1$ and by $2$ respectively. 
In follows that $\mathcal A=\{T_n,n\in\mathbb N\}$ is a collection of pointwise bounded and equicontinuous functions. Then Arzela-Ascoli theorem allows us to obtain a subsequence, that for simplicity we shall denote again $T_n$, that converges uniformly on any compact subset of $\mathbb R$. We can do the same for $\mathcal B=\{N_n,n\in\mathbb N\}$ and conclude that 
$$\lim_{n\rightarrow\infty}(T_n(s),N_n(s))=(T_*(s),N_*(s)).$$
The system satisfied by $(T_*(s),N_*(s))$ is then
$$\left\{\begin{array}{c}T_*'(s)=\Re \left(a e^{i\frac{s^2}{4}}N_*(s)\right),\\ N_*'(s)=ae^{i\frac{s^2}{4}}T_*(s),
\end{array}\right.$$
with initial data $(T_*(0),N_*(0))$, 
which means that 
$$\left(T_*(s),\Re \left(e^{-i\frac{s^2}{4}}N_*(s)\right),\Im\left(e^{-i\frac{s^2}{4}}N_*(s)\right)\right)$$ 
is the Frenet frame of the curve with curvature and torsion $(a,\frac s 2),$
exactly the one of the self-similar profile, see \cite{GRV}. Hence on the one hand, modulo a rotation,
$$T_*(s)=A_a^++\mathcal O\left(\frac{1}{s}\right)\quad,\quad N_*(s)=B_a^++\mathcal O\left(\frac{1}{s}\right).$$
On the other hand, using \eqref{ssdecay}
$$T_*(s)=\lim_{n\rightarrow\infty}T_n(s)=\lim_{n\rightarrow\infty}(T(t_n,\sqrt{t_n}\,s)-T(0,\sqrt{t_n}\,s)+T(0,\sqrt{t_n}\,s))= \mathcal O\left(\frac{1}{s}\right)+\lim_{n\rightarrow\infty}T(0,\sqrt{t_n}\,s),$$
so we obtain the existence and the value of $T(0,0^+)$,
$$T(0,0^+)=A_a^+.$$
In the same maner we get modulo the same rotation that $T(0,0^-)=A_a^-$. Similarly we obtain the existence and the values of $\tilde N(0,0^+)$, $\tilde N(0,0^-)$.
\end{proof}

\section{Appendix: the J-evolution}\label{Jappendix}

At the linear level, if $w(t)=S(t,t_0)w(t_0)$ is the solution of\footnote{In \cite{BV2} we have actually denoted by $u(t)=S(t,t_0)w(t_0)$ the solution of 
$$iu_t+u_{xx}+ \frac{a^2}{t^{1+ 2ia^2}}\overline{u}=0,$$
with initial data $w(t_0)$ at time $t_0$, so $u(t)=e^{-ia^2\log t}w(t)$. Therefore getting estimates on $|\hat u(t)|$ and $|\widehat{J(t)u(t)}|$ is equivalent to getting estimates on $|\hat w(t)|$ and $|\widehat{J(t)w(t)}|$ respectively. }
\begin{equation}\label{lin} iw_t+w_{xx}+\frac{a^2}{2t}(w+\overline w)=0,
\end{equation}
with initial data $w(t_0)$ at time $t_0$, 
then $v(t)=J(t)w(t)=(x+2it\partial_x)\,w(t)$ satisfies
$$iv_t+v_{xx}+\frac{a^2}{2t}(v+\overline v)=\frac{a^2}{2t}(\overline{Jw}-J\overline w)=-2ia^2\,\overline w_x,$$
with initial data $v(t_0)=J(t_0)w(t_0)$ at time $t_0$.

\smallskip

We recall that for the free Schr\"odinger equation, the norm $\|J(t)e^{it\partial_x}f\|_{L^2}$ is constant in time, since $J(t)$ comutes with $e^{it\partial_x^2}$. In here, we do not hope such property for \eqref{lin}, but nevertheless we shall get a control in time better than $t$.

\smallskip

First we shall prove a growth control in time of the Fourier modes of solutions of \eqref{lin}, that improve the one in Lemma 2.1 of \cite{BV2}. More precisely, the parameter $a$ will not be involved  anymore in the polynomial control in time of the growth of the Fourier modes. 
\subsection{Improvement of the growth of the Fourier modes  for the linear equation}

\begin{lemma}\label{lemmaimprov} Let $1\leq t_0\leq t$. For all $\delta>0$ there exists a constant $C(a,\delta)$ such that 
$$|\hat w(t,\xi)|\leq C(a,\delta)\,\frac{t^{\delta}}{t_0^{\delta}}\,( |\hat w(t_0,\xi)|+|\hat w(t_0,-\xi)|)\, \forall \xi\in\mathbb R.$$
\end{lemma}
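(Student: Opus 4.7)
The plan is to work on the Fourier side, where \eqref{lin} produces a coupled $2\times 2$ ODE system between $U(t) := \hat w(t,\xi)$ and $V(t) := \overline{\hat w(t,-\xi)}$, the coupling coming from the $\overline{w}$ in the equation:
\[
U_t = -i\xi^2 U + \frac{ia^2}{2t}(U+V), \qquad V_t = i\xi^2 V - \frac{ia^2}{2t}(U+V).
\]
A first key simplification is the phase substitution $U = e^{-i\xi^2 t + \frac{ia^2}{2}\log t}\tilde U$, $V = e^{i\xi^2 t + \frac{ia^2}{2}\log t}\tilde V$, which preserves moduli (so it is harmless for the target estimate) and reduces the system to the purely off-diagonal oscillatory form
\[
\tilde U_t = \frac{ia^2}{2t}e^{2i\xi^2 t}\tilde V, \qquad \tilde V_t = -\frac{ia^2}{2t}e^{-2i\xi^2 t}\tilde U.
\]
The naive energy identity $\partial_t (|\tilde U|^2+|\tilde V|^2) = -\frac{2a^2}{t}\Im(e^{2i\xi^2 t}\overline{\tilde U}\tilde V)$ together with the Cauchy--Schwarz bound $|\overline{\tilde U}\tilde V| \leq (|\tilde U|^2+|\tilde V|^2)/2$ immediately gives Gronwall growth $(t/t_0)^{a^2}$ for the energy, i.e.\ the exponent $a^2/2$ on $|\hat w|$. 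This presumably corresponds to Lemma 2.1 of \cite{BV2}; the improvement asked for in the statement is to extract additional cancellation from the oscillation $e^{\pm 2i\xi^2 t}$.

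The heart of the argument is therefore to iterate integration by parts in the Duhamel representation
\[
\tilde U(t) = \tilde U(t_0) + \frac{ia^2}{2}\int_{t_0}^t \frac{e^{2i\xi^2 s}}{s}\tilde V(s)\,ds.
\]
Writing $e^{2i\xi^2 s} = (2i\xi^2)^{-1}\partial_s e^{2i\xi^2 s}$ and integrating by parts yields a boundary contribution of size $1/(\xi^2 t_0)$ together with a new integral in which the derivative either hits the smooth weight $1/s$ or the factor $\tilde V$ itself. In the latter case one substitutes the equation $\tilde V_s = -\frac{ia^2}{2s}e^{-2i\xi^2 s}\tilde U$; the oscillation $e^{-2i\xi^2 s}$ exactly cancels the $e^{2i\xi^2 s}$ in front, producing a non-oscillatory factor $a^2/(\xi^2 s^2)\tilde U$ that can be re-fed into the iteration. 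After $N$ steps one obtains an estimate of the schematic form
\[
|\tilde U(t)| + |\tilde V(t)| \leq \Bigl(1 + C\sum_{k=1}^N \frac{(a^2)^k}{(\xi^2 t_0)^k}\Bigr)(|\tilde U(t_0)|+|\tilde V(t_0)|) + R_N,
\]
with a remainder $R_N$ that can be absorbed by a bootstrap. In the regime $\xi^2 t_0 \geq C_0 a^2$ the sum is geometric and small, giving the conclusion with a constant depending only on $a$ and no loss in time.

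For the complementary low-frequency regime $\xi^2 t_0 < C_0 a^2$ (which includes $\xi = 0$), I would introduce a threshold time $t_1 := \max(t_0, C_0 a^2/\xi^2)$. On $[t_1,t]$ the integration-by-parts analysis applied from time $t_1$ costs only a constant. On $[t_0, t_1]$ the naive Gronwall bound gives at most a factor $(t_1/t_0)^{a^2/2}$, and since $t \geq t_1$ we have $t_1/t_0 \leq t/t_0$; the key point is that $t_1/t_0$ is bounded in terms of $1/(\xi^2 t_0)$ uniformly in $\xi$ in this regime, so the whole growth can be absorbed into $C(a,\delta)(t/t_0)^\delta$ by choosing $N$ (the number of integrations by parts) large enough depending on $\delta$. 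The borderline scalar case $\xi=0$ is most transparent: the system reduces to $\hat w(t,0) = \hat w(t_0,0) + ia^2\,\Re\hat w(t_0,0)\,\log(t/t_0)$, and the elementary inequality $\log x \leq C_\delta x^\delta$ for $x \geq 1$ closes the argument. The main obstacle is the bookkeeping: organising the integration-by-parts iteration and the threshold split so that the resulting constant $C(a,\delta)$ is uniform in $\xi$, blows up controllably as $\delta \to 0$, and handles all the cross terms generated by substituting the equation for $\tilde V_s$ back into the iteration.
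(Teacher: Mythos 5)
Your reduction to the oscillatory system for $\tilde U,\tilde V$ and the integration-by-parts treatment of the regime $\xi^2 t_0\gtrsim a^2$ are fine (this is a legitimate, arguably cleaner, substitute for the paper's explicit diagonalization via the matrix $P(t)$ and the phase $\Phi(t)$), and your exact solution at $\xi=0$ is correct. The gap is in the remaining regime $0<\xi^2 t_0<C_0a^2$ on the time interval $[t_0,t_1]$, $t_1=C_0a^2/\xi^2$ --- which is precisely where the content of the lemma lies, since the whole point is to improve the $(t/t_0)^{Ca^2}$ growth given by the naive Gronwall bound to $(t/t_0)^{\delta}$ for \emph{every} $\delta>0$. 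Your Gronwall step on $[t_0,t_1]$ produces the factor $(t_1/t_0)^{a^2/2}=\bigl(C_0a^2/(\xi^2t_0)\bigr)^{a^2/2}$, and the proposed absorption into $C(a,\delta)(t/t_0)^{\delta}$ fails: taking $t=t_1$ you would need $(t_1/t_0)^{a^2/2-\delta}\le C(a,\delta)$, which is false for $\delta<a^2/2$ as $\xi\to 0$ with $t_0$ fixed. Increasing the number $N$ of integrations by parts cannot repair this, since those act only on $[t_1,t]$ where the oscillation is effective.

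What is missing is an argument showing that for $\xi^2 s\le\epsilon$ the dynamics is a small perturbation of the logarithmic $\xi=0$ dynamics you computed; the energy/Cauchy--Schwarz step throws away exactly the structure that makes this true, namely that $\partial_t\widehat{\Re w}=\xi^2\,\widehat{\Im w}$ carries the small factor $\xi^2\le\epsilon/t$. The paper exploits this by passing to the second-order equation $\partial_t^2\widehat{\Re w}=\xi^2(-\xi^2+a^2/t)\widehat{\Re w}$ in integral form, so that the forcing coefficient is bounded by $Ca^2\epsilon/\tau$, and then closing a bootstrap on $\sup_{\tau}\tau^{-\delta}|\widehat{\Re w}(\tau,\xi)|$, which works once $\epsilon$ is chosen small compared with $\delta/a^2$; the bound for $\widehat{\Im w}$ then follows from its own equation. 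The intermediate band $\epsilon\le\xi^2 s\le 2a^2$ \emph{can} be handled by your Gronwall argument, because there the time interval has bounded multiplicative length and costs only $C(a,\delta)$. You need to add the very-low-frequency step (or an equivalent perturbative argument off the $\xi=0$ solution) for the proof to be complete.
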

\begin{proof}
We have
\begin{equation}\label{remodeslin}
\partial_t\,\,\widehat{\Re w}(t,\xi)=\xi^2\,\,\widehat{\Im w}(t,\xi),
\end{equation}
\begin{equation}\label{immodeslin}
\partial_t\,\,\widehat{\Im w}(t,\xi)=-\xi^2\,\,\widehat{\Re w}(t,\xi)+ \frac {a^2}{t}\,\widehat{\Re w}(t,\xi),
\end{equation}
so
\begin{equation}\label{immodeslinsecond}
\partial_t^2\,\,\widehat{\Re w}(t,\xi)=\xi^2\left(-\xi^2+ \frac {a^2}{t}\right)\,\widehat{\Re w}(t,\xi).
\end{equation}
We infer
$$\widehat{\Re w}(t,\xi)=\widehat{\Re w}(t_0,\xi)+(t-t_0)\,\xi^2\,\,\widehat{\Im w}(t_0,\xi)+\int_{t_0}^t(t-\tau)\,\xi^2\left(-\xi^2+ \frac {a^2}{\tau}\right)\,\widehat{\Re w}(\tau,\xi)\,d\tau.$$
Let $\delta>0$, and let $0<\epsilon<\min\{1,a^2\}$ to be chosen also small enough with respect to $\delta$. Then  for $\xi^2\leq \frac{\epsilon}{t}$, 
$$t^{-\delta}\,|\widehat{\Re w}(t,\xi)|\leq t^{-\delta}\left(|\widehat{\Re w}(t_0,\xi)|+|\widehat{\Im w}(t_0,\xi)|\right)+t^{-\delta}\,Ca^2\epsilon\int_{t_0}^t\frac{\tau^\delta}{\tau}\,d\tau\,\sup_{t_0\leq \tau\leq t}\tau^{-\delta}|\widehat{\Re w}(\tau,\xi)|$$
$$\leq t_0^{-\delta}\left(|\widehat{\Re w}(t_0,\xi)|+|\widehat{\Im w}(t_0,\xi)|\right)+\frac{Ca^2\epsilon}{\delta}\,\sup_{t_0\leq \tau\leq t}\tau^{-\delta}\,|\widehat{\Re w}(\tau,\xi)|.$$
Then, by choosing $\epsilon$ small with respect to $\delta$, we obtain
$$|\widehat{\Re w}(t,\xi)|\leq C(a,\delta)\,\frac{t^{\delta}}{t_0^{\delta}}\,( |\hat w(t_0,\xi)|+|\hat w(t_0,-\xi)|).$$
Using similar arguments for the imaginary part we get for $\xi^2\leq \frac{\epsilon}{t}$,
$$t^{-\delta}\,|\widehat{\Im w}(t,\xi)|\leq t^{-\delta}\,|\widehat{\Im w}(t_0,\xi)|+t^{-\delta}C\int_{t_0}^t\frac{a^2}{\tau}\,C(a,\delta)\,\frac{\tau^{\delta}}{t_0^{\delta}}\,( |\hat w(t_0,\xi)|+|\hat w(t_0,-\xi)|)\,d\tau$$
$$\leq t_0^{-\delta}\,|\widehat{\Im w}(t_0,\xi)|+ t_0^{-\delta}\,C(a,\delta)\,( |\hat w(t_0,\xi)|+|\hat w(t_0,-\xi)|),$$
so
$$|\hat w(t,\xi)|\leq C(a,\delta)\,\frac{t^{\delta}}{t_0^{\delta}}\,( |\hat w(t_0,\xi)|+|\hat w(t_0,-\xi)|),$$ 
and the Lemma follows for $\xi^2\leq \frac{\epsilon}{t}$. This part improves  Lemma 2.1 in \cite{BV2}, where the control was of $\frac{t^a}{t_0^a}$.

The proof of Lemma 2.2 in \cite{BV2} contains the result that in the remaining regions $\frac{\epsilon}{t}\leq \xi^2\leq \frac{2a^2}{t}$ and $\frac{2a^2}{t}\leq \xi^2$ the evolution of the $\xi-$Fourier modes stays bounded. For instance,  when $\frac{\epsilon}{t}\leq \xi^2\leq \frac{2a^2}{t}$ we did an energy estimate by considering
$$\partial_t\left(|\widehat{\Re w}(t,\xi)|^2+|\widehat{\Im w}(t,\xi)|^2\right)=\frac{4a^2}{t}\,\Re (\widehat{\Re w}(t,\xi)\,\overline{\widehat{\Im w}(t,\xi)})\leq \frac{2a^2}{t}\left(|\widehat{\Re w}(t,\xi)|^2+|\widehat{\Im w}(t,\xi)|^2\right).$$
By integrating from any $\frac{\epsilon}{\xi^2}\leq t_1\leq\frac{2a^2}{\xi^2}$ to any $\frac{\epsilon}{\xi^2}\leq t\leq \frac{2a^2}{\xi^2}$, we obtain 
$$|\hat w(t,\xi)|^2+|\hat w(t,-\xi)|^2\leq C(a) \left(|\hat w(t_1,\xi)|^2+|\hat w(t_1,-\xi)|^2\right),$$
so the Lemma follows for $\xi^2\leq \frac{2a^2}{t}$. For larger times $t\geq \frac{2a^2}{\xi^2}$, we obtained in \cite{BV2} that the evolution of the Fourier modes is bounded by diagonalizing the system of equations of $\widehat{\Re w}$ and $\widehat{\Im w}$. Therefore the Lemma follows for all $\xi$.\end{proof}
Finally, recall that Lemma 2.2 in \cite{BV2} asserts that
$$|\hat w(t,\xi)|\leq \left(C(a)+C(a,\delta)\frac{1}{(\xi^2t_0)^{\delta}}\right)( |\hat w(t_0,\xi)|+|\hat w(t_0,-\xi)|),\,\, \forall \xi\neq 0.$$

\subsection{J-evolution for the linear equation}
Now we turn to the $J(t)u(t)$ evolution. By using the Duhamel formula for  $S(t,t_0)$ given by equation \eqref{lin},
\begin{equation}\label{DuhJ}
v(t)=S(t,t_0)v(t_0)+\int_{t_0}^t S(t,\tau)(-2ia^2\,\overline w_x(\tau))\,d\tau,
\end{equation}
a similar estimate is obtained also on $v$,
\begin{equation}\label{lowinfty}|\hat v(t,\xi)|\leq C(a,\delta)\,\frac{t^{\delta}}{t_0^{\delta}}\,( |\hat v(t_0,\xi)|+|\hat v(t_0,-\xi)|)+\int_{t_0}^tC(a,\delta)\,\frac{t^{\delta}}{\tau^{\delta}}\,|\xi|\,\frac{\tau^{\delta}}{t_0^{\delta}}\,( |\hat w(t_0,\xi)|+|\hat w(t_0,-\xi)|)\,d\tau\end{equation}
$$\leq C(a,\delta)\,\frac{t^{\delta}}{t_0^{\delta}}\,( |\hat v(t_0,\xi)|+|\hat v(t_0,-\xi)|)+C(a,\delta)\,\frac{t^{\delta}}{t_0^{\delta}}\,t\,|\xi|\,( |\hat w(t_0,\xi)|+|\hat w(t_0,-\xi)|),$$
and we finally obtain
\begin{equation}\label{lowl2}\|\hat v(t,\xi)\|_{L^2(\xi^2\leq \frac {1}{t})}\leq C(a,\delta)\,\frac{t^{\delta}}{t_0^{\delta}}\,\| v(t_0)\|_{L^2}+C(a,\delta)\,\frac{t^{\delta+\frac 12}}{t_0^{\delta}}\,\| w(t_0)\|_{L^2}.
\end{equation}

On the other hand, we get the following version of Lemma 2.2 in \cite{BV2}.
\begin{lemma}\label{highlemma} For all $\xi\neq 0$ and $1\leq t_0\leq t$ the following estimate holds

\begin{equation}\label{high}|\widehat{v}(t,\xi)|\leq \left(C(a)+\frac{C(a,\delta)}{(\xi^2 t_0)^\delta}\right)\left(|\widehat{v}(t_0,\xi)|+|\widehat{v}(t_0,-\xi)|\right)\end{equation}
$$+\left(C(a)+C(a,\delta)\frac{1+|\log|\xi||}{(\xi^2 t_0)^\delta}\right)\,\frac{|\widehat{w}(t_0,\xi)|+|\widehat{w}(t_0,-\xi)|}{|\xi|}.$$
\end{lemma}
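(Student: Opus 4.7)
The plan is to start from the Duhamel formula \eqref{DuhJ}: the homogeneous part $S(t,t_0)v(t_0)$ is handled directly by Lemma 2.2 of \cite{BV2}, which produces exactly the first term of \eqref{high}. The real work is to control the Fourier transform at $\xi\neq 0$ of the forcing integral $\int_{t_0}^t S(t,\tau)(-2ia^2\bar w_x(\tau))\,d\tau$. Since $\widehat{\bar w_x}(\tau,\xi)=i\xi\,\overline{\hat w(\tau,-\xi)}$, the forcing carries a loss of $|\xi|$. A naive bound, combining Lemma 2.2 of \cite{BV2} on $S(t,\tau)$ from $\tau$ to $t$ with the same Lemma on $\hat w(\tau,\pm\xi)$ from $t_0$ to $\tau$, yields an integrand of size $|\xi|\bigl(C(a)+C(a,\delta)/(\xi^2\tau)^\delta\bigr)\bigl(C(a)+C(a,\delta)/(\xi^2 t_0)^\delta\bigr)$ whose $\tau$-integral blows up like $t|\xi|$, far from the claimed $1/|\xi|$ gain. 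To reach it I would split the $\tau$-integration at the transition time $\tau_\ast=1/\xi^2$, assuming $t_0\leq \tau_\ast\leq t$ (the degenerate cases being simpler and giving only one of the two pieces below).

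In the low regime $\tau\in[t_0,\tau_\ast]$, where $\xi^2\tau\leq 1$ so that only the singular part $(\xi^2\tau)^{-\delta}$ of the propagator bound is relevant, an elementary calculation for $0<\delta<1$ gives
$$|\xi|\int_{t_0}^{\tau_\ast}\frac{C(a,\delta)^2}{(\xi^2\tau)^\delta\,(\xi^2 t_0)^\delta}\,d\tau\;\lesssim\;\frac{C(a,\delta)}{|\xi|\,(\xi^2 t_0)^\delta},$$
which already matches the shape of the second term of \eqref{high}, without the logarithmic factor. In the high regime $\tau\in[\tau_\ast,t]$, where $\xi^2\tau\geq 1$, the homogeneous system \eqref{remodeslin}--\eqref{immodeslin} is essentially a free Schr\"odinger evolution: the second-order ODE \eqref{immodeslinsecond} has coefficient $-\xi^2(\xi^2-a^2/\tau)\sim -\xi^4$, and a WKB/diagonalisation argument analogous to the one at the end of the proof of Lemma 2.2 of \cite{BV2} should provide a decomposition of the $(\widehat{\Re w},\widehat{\Im w})$ matrix propagator $M(t,\tau)$ into two oscillating pieces $e^{\pm i\Phi(t,\tau,\xi)}U_\pm(t,\tau,\xi)$, with $|U_\pm|\leq C(a)$, $|\partial_\tau U_\pm|\leq C(a)/\tau$ and $|\partial_\tau\Phi|\sim\xi^2$.

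A single integration by parts in $\tau$ against this oscillating phase converts the $|\xi|$ loss from $\bar w_x$ into a $1/|\xi|$ gain. The boundary term at $\tau=t$, after bounding $\hat w(t,\pm\xi)$ by Lemma 2.2 of \cite{BV2} back to time $t_0$, produces the $C(a)(|\hat w(t_0,\xi)|+|\hat w(t_0,-\xi)|)/|\xi|$ piece of \eqref{high}; the boundary term at $\tau=\tau_\ast$ contributes $C(a,\delta)(\xi^2 t_0)^{-\delta}(|\hat w(t_0,\xi)|+|\hat w(t_0,-\xi)|)/|\xi|$ after the analogous bound on $\hat w(\tau_\ast,\pm\xi)$; and the interior remainder, involving $\partial_\tau U_\pm\sim 1/\tau$ integrated over $[\tau_\ast,t]$ against $\hat w(\tau,\pm\xi)/|\xi|$, produces the logarithm through $\int_{\tau_\ast}^t d\tau/\tau=\log(t\xi^2)\lesssim 1+|\log|\xi||$, which accounts for the $(1+|\log|\xi||)(\xi^2 t_0)^{-\delta}$ factor in \eqref{high}. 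The main technical obstacle I expect is the rigorous WKB decomposition of $M(t,\tau)$ with sharp control on $U_\pm$ and on $\partial_\tau U_\pm$ uniformly in $\xi$ and $\tau$ in the high regime, together with careful bookkeeping to ensure that the logarithmic correction appears only in front of the singular $(\xi^2 t_0)^{-\delta}$ coefficient and that all constants remain independent of $t_0,t$.
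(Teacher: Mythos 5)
Your overall strategy (Duhamel for the homogeneous part and the low frequencies, then an oscillatory/diagonalisation argument in the regime $\xi^2\tau\gtrsim 1$ to convert the $|\xi|$ loss from $\overline w_x$ into a $1/|\xi|$ gain) is parallel in spirit to the paper's, and your low-regime computation is correct. The paper, however, does not run the high regime through the Duhamel integral: for $\xi^2\geq 2a^2/t$ it writes the forced ODE system \eqref{remodes}--\eqref{immodes} for $(\widehat{\Re v},\widehat{\Im v})$ directly, rescales time $t\mapsto t/\xi^2$ (which is where the forcing $2ia^2\xi\,\hat w$ becomes $\tfrac{2ia^2}{\xi}(Y,Z)$, i.e.\ the entire $1/|\xi|$ gain appears at once), diagonalises with $P(t)$ and the phase $\Phi(t)$, replaces $(Y,Z)$ by oscillating multiples of the asymptotic state $\hat u_+(\xi)$ plus a remainder $R(t,\xi)$ via relation (31) of \cite{BV2}, and integrates by parts once more on $R$. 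Crucially, the $1+|\log|\xi||$ in the statement comes from Lemma 2.10 of \cite{BV2}, i.e.\ from bounding $|\hat u_+(\xi)|$ by the data at time $t_0$ --- not from the time integration.

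This points to the concrete gap in your proposal: you attribute the logarithm to the interior remainder $\int_{\tau_\ast}^{t}\frac{d\tau}{\tau}=\log(t\xi^2)$. But $\log(t\xi^2)\leq \log t+2|\log|\xi||$ contains an unbounded $\log t$, and the constants in \eqref{high} must be uniform in $t$; as written your high-regime estimate therefore does not prove the lemma. Relatedly, your ansatz $|\partial_\tau U_\pm|\leq C(a)/\tau$ is not the right size: the diagonalising matrix depends on $\tau$ only through $\alpha(\tau)=\sqrt{1-a^2/(\xi^2\tau)}$, so $\partial_\tau U_\pm=O\!\left(\tfrac{a^2}{\xi^2\tau^2}\right)$, which integrates over $[\tau_\ast,t]$ to a constant with no logarithm at all. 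To close your argument you need either this sharper derivative bound or a second integration by parts on the slowly varying remainder (this is exactly what the paper does with $R(t,\xi)$, obtaining a $C(a)/t$ contribution), and you then need to import the $(1+|\log|\xi||)(\xi^2t_0)^{-\delta}$ factor from the estimate on the asymptotic state rather than from the $\tau$-integral.
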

\begin{proof}
For $\xi^2\lesssim \frac 1t$ the Lemma follows from \eqref{lowinfty}. For $\xi^2\geq \frac {2a^2}{t}$ we shall diagonalize the system

\begin{equation}\label{remodes}
\partial_t\,\,\widehat{\Re v}(t,\xi)=\xi^2\,\widehat{\Im v}(t,\xi)-2ia^2\xi\,\widehat{\Re w}(t,\xi),
\end{equation}
\begin{equation}\label{immodes}
\partial_t\,\,\widehat{\Im v}(t,\xi)=-\xi^2\,\widehat{\Re v}(t,\xi)+ \frac {a^2}{t}\,\widehat{\Re v}(t,\xi)+2ia^2\xi\,\widehat{\Im w}(t,\xi).
\end{equation}
With similar notations as in \cite{BV2}, we denote for $t\geq 2 a^2$
$$A(t,\xi)=\widehat{\Re v}\left(\frac{t}{\xi^2},\xi\right)\,\,,\,\,B(t,\xi)=\widehat{\Im v}\left(\frac{t}{\xi^2},\xi\right),$$ 
$$Y(t,\xi)=\widehat{\Re w}\left(\frac{t}{\xi^2},\xi\right)\,\,,\,\,Z(t,\xi)=\widehat{\Im w}\left(\frac{t}{\xi^2},\xi\right),$$
so we have the system
\begin{equation}\label{system}
\left\{\begin{array}{c}
\partial_tA(t,\xi)=B(t,\xi)-\frac{2ia^2}{\xi}\,Y(t,\xi),\\\,\\\partial_tB(t,\xi)=\left(-1+\frac{a^2}{ t}\right)A(t,\xi)+\frac{2ia^2}{\xi}\,Z(t,\xi).
\end{array}\right.
\end{equation}
We shall diagonalize the system
$$\partial_t\left(\begin{array}{c}A\\ B\end{array}\right)
=\left(\begin{array}{cc}0 &1\\-\left(1-\frac{a^2}{t}\right) & 0\end{array}\right)\left(\begin{array}{c}A\\ B\end{array}\right)+\frac{2ia^2}{\xi}\left(\begin{array}{c}-Y\\ Z\end{array}\right).$$
Let 
$$\alpha(t)=\sqrt{1-\frac{a^2}{t}}\quad,\quad 
P(t)=\left(\begin{array}{cc}1&1\\i\alpha(t)&-i\alpha(t)\end{array}\right).$$
In particular,
$$\frac{1}{\sqrt{2}}\leq \alpha(t)\leq 1\quad,\quad P^{-1}(t)=\left(\begin{array}{cc} \frac 12&-\frac{i}{2\alpha(t)}\\ \frac 12&\frac{i}{2\alpha(t)}\end{array}\right).$$
Then the new functions
$$\left(\begin{array}{c}A_1(t,\xi)\\ B_1(t,\xi)\end{array}\right)=P^{-1}(t)\left(\begin{array}{c}A(t,\xi)\\ B(t,\xi)\end{array}\right)$$
satisfy
$$\partial_t\left(\begin{array}{c}A_1\\ B_1\end{array}\right)=\partial_t(P^{-1})\,P\left(\begin{array}{c}A_1\\ B_1\end{array}\right)+\left(\begin{array}{cc} i\alpha&0\\0&-i\alpha\end{array}\right)\left(\begin{array}{c}A_1\\ B_1\end{array}\right)+P^{-1}\,\frac{2ia^2}{\xi}\,\left(\begin{array}{c}-Y\\ Z\end{array}\right).$$
We introduce
$$\Phi(t)=t-\frac{a^2}{2}\log t-\int_t^\infty \alpha(s)-1+\frac{a^2}{2s}\,ds,$$
that verifies
$$\Phi(t)'=\alpha(t).$$
Finally, the functions
$$\left(\begin{array}{c}A_2(t,\xi)\\ B_2(t,\xi)\end{array}\right)=
\left(\begin{array}{cc} e^{-i\Phi(t)} &0\\0 & e^{i\Phi(t)}\end{array}\right)\left(\begin{array}{c}A_1(t,\xi)\\ B_1(t,\xi)\end{array}\right)$$
are solutions of
$$
\partial_t\left(\begin{array}{c}A_2\\ B_2\end{array}\right)=M(t)\,\left(\begin{array}{c}A_2\\ B_2\end{array}\right)+\left(\begin{array}{cc} e^{-i\Phi(t)} &0\\0 & e^{i\Phi(t)}\end{array}\right)P^{-1}\,\frac{2ia^2}{\xi}\,\left(\begin{array}{c}-Y\\ Z\end{array}\right)
$$
$$=M(t)\,\left(\begin{array}{c}A_2\\ B_2\end{array}\right)+\frac{2ia^2}{\xi}\left(\begin{array}{c}e^{-i\Phi(t)} (Y-\frac{i}{2\alpha}Z)\\ e^{i\Phi(t)} (Y+\frac{i}{2\alpha}Z)\end{array}\right)$$
where
$$M(t)=\left(\begin{array}{cc} e^{-i\Phi(t)} &0\\0 & e^{i\Phi(t)}\end{array}\right)\partial_t(P^{-1})P\left(\begin{array}{cc} e^{i\Phi(t)} &0\\0 & e^{-i\Phi(t)}\end{array}\right)=\frac{a^2}{4t^2\alpha^2}
\left(\begin{array}{cc} -1 & e^{-2i\Phi(t)}\\e^{2i\Phi(t)} & -1\end{array}\right).$$
By the relation (31) in \cite{BV2}, for  $t\geq 12 a^2$,
\begin{equation}\label{systJ}
\partial_t\left(\begin{array}{c}A_2\\ B_2\end{array}\right)(t,\xi)=M(t)\,\left(\begin{array}{c}A_2\\ B_2\end{array}\right)(t,\xi)+\frac{2ia^2}{\xi}\left(\left(\begin{array}{c}-e^{-2i\Phi(t)}\,  Z^+(\xi)\\ -e^{2i\Phi(t)} \,Y^+(\xi)\end{array}\right)+R(t,\xi)\right),
\end{equation}
where
$$\overline{Y^+(-\xi)}=Z^+(\xi)=\frac 12\,e^{-i\frac{a^2}{2}\log\xi^2}\hat{u_+}(\xi)\,,$$
and
$$\,R(t,\xi)=\left(\begin{array}{c}-e^{-i\Phi(t)}\int_t^\infty\frac{ia^2\,e^{i\Phi(\tau)}}{2\alpha^3(\tau)\,\tau^2}\,Z(\tau,\xi)\,d\tau \\\,\\ e^{i\Phi(t)}\int_t^\infty\frac{ia^2\,e^{-i\Phi(\tau)}}{2\alpha^3(\tau)\,\tau^2}\,Z(\tau,\xi)\,d\tau \end{array}\right).$$
For $2a^2\leq \tilde t\leq t$ we   integrate by parts again. We do it just for  the first component of $R(t,\xi)$ because the other one is similar. We obtain
$$\int_{\tilde t}^t -e^{-i\Phi(\tau)}\int_\tau^\infty\frac{ia^2\,e^{i\Phi(\theta)}}{2\alpha^3(\theta)\,\theta^2}\,Z(\theta,\xi)\,d\theta \,d\tau= \left[e^{-i\Phi(\tau)}\frac{1}{i\alpha(\tau)}\int_\tau^\infty\frac{ia^2\,e^{i\Phi(\theta)}}{2\alpha^3(\theta)\,\theta^2}\,Z(\theta,\xi)\,d\theta\right]_{\tilde t}^t$$
$$-\int_{\tilde t}^t e^{-i\Phi(\tau)}\frac{a^2}{i2\alpha^3(\tau)\tau^2}\,\int_\tau^\infty\frac{ia^2\,e^{i\Phi(\theta)}}{2\alpha^3(\theta)\,\theta^2}\,Z(\theta,\xi)\,d\theta \,d\tau+\int_{\tilde t}^t e^{-i\Phi(\tau)}\frac{1}{i\alpha(\tau)}\frac{ia^2\,e^{i\Phi(\tau)}}{2\alpha^3(\tau)\,\tau^2}\,Z(\tau,\xi) \,d\tau.$$
From Lemma 2.2 in \cite{BV2} it follows that we are in the region where $Z(\tau,\xi)$ is bounded by $C(a)\left(|\hat w(t_0,\xi)|+|\hat w(t_0,-\xi)|\right)$. Moreover, $\frac{1}{\sqrt{2}}\leq \alpha(t)\leq 1$, so
$$\left|\int_{\tilde t}^t -e^{-i\Phi(\tau)}\int_\tau^\infty\frac{ia^2\,e^{i\Phi(\theta)}}{2\alpha^3(\theta)\,\theta^2}\,Z(\theta,\xi)\,d\theta \,d\tau\right|\leq \frac{C(a)}{t}\left(|\hat w(t_0,\xi)|+|\hat w(t_0,-\xi)|\right).$$
Again since $\frac{1}{\sqrt{2}}\leq \alpha(t)\leq 1$, all the entries of $M(t)$ are upper-bounded by $\frac {a^2}{2t^2}$. 
In conclusion, integrating expression \eqref{systJ}, we have for $2a^2\leq \tilde t\leq t$
$$|A_2(t,\xi)|+|B_2(t,\xi)|\leq |A_2(\tilde t,\xi)|+|B_2(\tilde t,\xi)|+\int_{\tilde t}^t\frac {a^2}{t^2}\,(|A_2(\tau,\xi)|+|B_2(\tau,\xi)|)\,d\tau$$
$$+\frac{C(a)}{|\xi|}\left(|\hat{u}_+(\xi)|+|\hat{u}_+(-\xi)|\right)+\frac{C(a)}{t|\xi|} \left(|\hat w(t_0,\xi)|+|\hat w(t_0,-\xi)|\right).$$
So we get 
$$|A_2(t,\xi)|+|B_2(t,\xi)|\leq 2\left(|A_2(\tilde t,\xi)|+|B_2(\tilde t,\xi)|\right)+\frac{C(a)}{|\xi|}|\hat{u}_+(\xi)|+\frac{C(a)}{t|\xi|}\left(|\hat w(t_0,\xi)|+|\hat w(t_0,-\xi)|\right). $$
Finally, from the relation
$$|A_2|^2+|B_2|^2=\left|\frac 12 A-\frac{i}{2\alpha}B\right|^2+\left|\frac 12 A+\frac{i}{2\alpha}B\right|^2= \frac{1}{2}|A|^2+\frac{1}{2\alpha^2}|B|^2,$$
and from $\frac{1}{\sqrt{2}}\leq \alpha(t)\leq 1$ it follows that for $2a^2\leq \tilde t\leq t$,
$$
|A(t,\xi)|^2+|B(t,\xi)|^2\leq C(|A(\tilde t,\xi)|^2+|B(\tilde t,\xi)|^2)+\frac{C(a)}{|\xi|^2}|\hat{u}_+(\xi)|^2+\frac{C(a)}{t^2|\xi|^2} \left(|\hat w(t_0,\xi)|+|\hat w(t_0,-\xi)|\right).
$$
By recovering the first variables and using Lemma 2.10 in \cite{BV2} on the asymptotic state $\hat{u}_+(\xi)$, we obtain the Lemma.
\end{proof}
The pointwise estimate \eqref{high} implies
\begin{equation}\label{highl2}
\|\hat v(t,\xi)\|_{L^2(\frac {1}{t}\leq \xi^2)}\leq C(a,\delta)\frac{t^{\delta}}{t_0^{\delta}}\|v(t_0)\|_{L^2}+C(a,\delta)\,\frac{t^{\delta+\frac12^+}}{t_0^\delta}\|w(t_0)\|_{L^2}.
\end{equation}

In conclusion, gathering \eqref{lowl2} and \eqref{highl2}, we obtain a control for  the $L^2$ norm of the $J-$evolution of the linear solutions,
\begin{equation}\label{l2}
\|J(t)S(t,t_0)f\|_{L^2}\leq C(a,\delta)\frac{t^{\delta}}{t_0^{\delta}}\|J(t_0)f\|_{L^2}+C(a,\delta)\,\frac{t^{\delta+\frac 12^+}}{t_0^\delta}\|f\|_{L^2}.
\end{equation}

\subsection{J-evolution for the nonlinear equation}
We want to show by a bootstrap argument that the solution of the nonlinear equation
$$iu_t+u_{xx}+\frac{a+u}{2t}(|a+u|^2-a^2)=0$$
enjoys a good control in time of $\|J(t)u(t)\|_{L^2}$. First, let us mention that this quantity is finite in time. Indeed, $u(t)\in\dot H^1$ and it was proved in Lemma B.1 in \cite{BV2} that $xu(t)\in L^2$ with a high polynomial growth in time. 
\begin{prop}\label{Jnonlin} If $xu_1\in L^2$ and if $u_1$ is small enough in $X_1^\gamma$, then for all $t\geq 1$ we have
$$\|J(t)u(t)\|_{L^2}\leq C(u_1)\,t^{\frac34},$$
\end{prop}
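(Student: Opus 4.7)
The plan is to combine Duhamel's formula for the nonlinear equation \eqref{NLS} with the linear $J$-evolution bound \eqref{l2} and close by a bootstrap. First I would separate the linear part of \eqref{NLS}: expanding $\frac{a+u}{2t}(|a+u|^2-a^2)$ for real $a$ yields $\frac{a^2}{2t}(u+\bar u) + N(u)$ with
$$N(u)=\frac{au^2 + 2a|u|^2 + u|u|^2}{2t},$$
so $u$ satisfies $iu_t+u_{xx}+\frac{a^2}{2t}(u+\bar u) = -N(u)$. Writing $S(t,\tau)$ for the propagator of \eqref{lin} studied above, Duhamel gives $u(t)=S(t,1)u_1 \pm i\int_1^t S(t,\tau)N(u(\tau))\,d\tau$, and applying $J(t)$ together with \eqref{l2} (for some small $\delta>0$) produces
$$\|J(t)u(t)\|_{L^2} \lesssim t^{\delta}\|J(1)u_1\|_{L^2} + t^{\frac12+\delta}\|u_1\|_{L^2} + \int_1^t\!\Bigl(\tfrac{t^{\delta}}{\tau^{\delta}}\|J(\tau)N(u)\|_{L^2} + \tfrac{t^{\frac12+\delta}}{\tau^{\delta}}\|N(u)\|_{L^2}\Bigr)d\tau.$$
The data terms are finite because $xu_1\in L^2$ and $u_1\in H^1$ by hypothesis, and contribute $O(t^{\frac12+\delta})$, comfortably below $t^{3/4}$.

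For the nonlinear integrand I would treat the cubic term $\tau^{-1}u|u|^2$ via the identity $J(\tau)(|u|^2u)=2|u|^2 J(\tau)u - u^2\overline{J(\tau)u}$, which gives a bound involving only $\tau^{-1}\|u\|_{L^\infty}^2\|J(\tau)u\|_{L^2}$. For the quadratic terms $au^2$ and $2a|u|^2$ one computes $J(\tau)(u^2)=uJ(\tau)u+2i\tau u u_x$ (and similarly for $u\bar u$), which unavoidably produces a factor $\tau\|u\|_{L^\infty}\|u_x\|_{L^2}$. Using the dispersive decay $\|u(\tau)\|_{L^\infty}\lesssim \tau^{-1/2+\varepsilon}$ provided by Theorem 1.1 of \cite{BV2}, together with the weighted-$L^2$ growth control of Lemma B.1 of \cite{BV2} for $\|xu\|_{L^2}$ and $\|u_x\|_{L^2}$, the integrals are bounded by $C(u_1,\delta)\,t^{3/4}$ times a constant that can be made small with $\|u_1\|_{X_1^\gamma}$.

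The argument then closes by a standard bootstrap: $\|J(\tau)u(\tau)\|_{L^2}$ is a priori finite on any interval $[1,T]$ (by the weighted bounds of Lemma B.1 of \cite{BV2}), so assuming $\|J(\tau)u(\tau)\|_{L^2}\leq M\tau^{3/4}$ on $[1,T]$ and plugging into the inequality above, the smallness of $u_1$ in $X_1^\gamma$ makes the total nonlinear contribution strictly smaller than $\tfrac12 M t^{3/4}$, improving the bound and allowing $T$ to be taken arbitrarily large. The main obstacle is precisely the \emph{quadratic} nature of the nonlinearity: unlike the cubic term $u|u|^2$, whose $J$-image factors cleanly through $J(\tau)u$, the quadratic interactions $u^2$ and $|u|^2$ cannot avoid producing a $\tau\|u_x\|_{L^2}$ term. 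This quadratic coupling, coming from the linearization of \eqref{GP1} around the constant background $v_a=a$, is what forces the $t^{3/4}$ loss and rules out a time-uniform bound on $\|J(t)u(t)\|_{L^2}$.
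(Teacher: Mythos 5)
Your overall strategy coincides with the paper's: Duhamel for \eqref{NLS} around the linear flow $S(t,\tau)$ of \eqref{lin}, the linear $J$-estimate \eqref{l2} with $\delta<\frac14$, the observation that the cubic term is gauge invariant while the quadratic terms force a splitting of $J$ into a weight part and a derivative part, and a bootstrap closed by the smallness of $u_1$. The one step that does not go through as written is your treatment of the derivative piece $\tau u u_x$ produced by $J(\tau)(u^2)$. You control it by invoking a pointwise dispersive decay $\|u(\tau)\|_{L^\infty}\lesssim \tau^{-1/2+\varepsilon}$ attributed to Theorem 1.1 of \cite{BV2}, but that theorem is an $L^2$ modified-scattering statement and yields no $L^\infty$ decay rate; the asymptotic profile $e^{i(t-1)\partial_x^2}f_+$ itself need not decay in $L^\infty$ since $f_+$ is only known to lie in $L^2$ (with a mild pointwise bound on low frequencies). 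Without such decay, the generic bound $\|uu_x\|_{L^2}\le\|u\|_{L^\infty}\|u_x\|_{L^2}\lesssim 1$ turns the integral $t^{\delta}\int_1^t\|uu_x\|_{L^2}\,\tau^{-\delta}d\tau$ into $O(t)$, which overshoots $t^{3/4}$ and breaks the bootstrap.

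The paper closes exactly this term by a Strichartz argument instead: from \cite{BV2} one has $u\in L^\infty L^2\cap L^4L^\infty$ with small norms, hence by interpolation $u$ and $u_x$ belong to $L^8(1,\infty;L^4)$, and H\"older in time gives
$$\int_1^t\|u_x(\tau) u(\tau)\|_{L^2}\,\tau^{-\delta}d\tau\le\|u_x\|_{L^8L^4}\|u\|_{L^8L^4}\,\bigl\|\tau^{-\delta}\bigr\|_{L^{4/3}(1,t)}\lesssim t^{\frac34-\delta},$$
which after multiplication by $t^{\delta-\frac34}$ is of order one and small; this is in fact where the exponent $\frac34$ in the statement originates. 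Note also that Lemma B.1 of \cite{BV2} enters only to guarantee the a priori finiteness of $\|J(\tau)u(\tau)\|_{L^2}$ that legitimizes the bootstrap — its polynomial growth is far too crude to be used quantitatively — while the weight part $\|xu^2\|_{L^2}$ is fed back into $\sup_\tau\tau^{-3/4}\|J(\tau)u(\tau)\|_{L^2}$ rather than estimated by Lemma B.1. If you replace your $L^\infty$ decay claim by this Strichartz step, the rest of your argument matches the paper's proof.
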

\begin{proof}
The solution of the nonlinear equation writes as
\begin{equation}\label{nonlinformula}
u(t,x)=S(t,1)\,u_1+\int_{1}^t S(t,\tau) \frac{iF\,(\tau)}{\tau} d\tau.
\end{equation}
with $F(u)$ given by
\begin{equation}\label{F}
F(u)= \frac{|u|^2u+a(u^2+2|u|^2)}{2t}.
\end{equation}
We have from \eqref{l2}
$$t^{-\frac34}\|J(t)S(t,1)u_1\|_{L^2}\leq C(u_1)$$
provided that we choose $\delta<\frac 14.$
Then the worst Duhamel term is the quadratic one. We use again \eqref{l2} with $\delta<\frac 14$,
$$t^{-\frac34}\left\|J(t)\int_1^t S(t,\tau)u^2(\tau)\frac{d\tau}{\tau}\right\|_{L^2}$$
$$\leq C(a,\delta)\,t^{-\frac34}\int_1^t \left(\frac{t^{\delta}}{\tau^{\delta}}\|J(\tau)u^2(\tau)\|_{L^2}+\log t\,\frac{t^{\delta+\frac12^+}}{\tau^\delta}\|u^2(\tau)\|_{L^2}\right)\frac{d\tau}{\tau}.$$
Here $J(t)$ acts on a non-gauge invariant power, so we have to split this term into weight and derivative part, and loose a $t-$power. By using Cauchy-Schwarz inequality
$$t^{-\frac34}\left\|J(t)\int_1^t S(t,\tau)u^2(\tau)\frac{d\tau}{\tau}\right\|_{L^2}\leq C(a,\delta)\, t^{-\frac34}\,t^{\delta}\int_1^t\|xu^2(\tau)\|_{L^2}\,\frac{d\tau}{\tau^{1+\delta}}$$
$$+ C(a,\delta) \,t^{-\frac34}\,t^{\delta}\int_1^t\|u_x(\tau)u(\tau)\|_{L^2}\,\frac{d\tau}{\tau^{\delta}}+C(a,\delta)\,t^{-\frac34}\,\log t\,t^{\delta+\frac 12^+}\|u\|_{L^\infty L^2}\|u\|_{L^\infty L^\infty}$$
\,

$$\leq C(a,\delta) \sup_{1\leq\tau\leq t}\|\tau^{-\frac34}J(\tau)u(\tau)\|_{L^2}\|u\|_{L^\infty H^1}+ C(a,\delta) \|u_x\|_{L^8 L^4}\|u\|_{L^8 L^4}$$
$$+C(a,\delta)t^{-\frac34}\log t\,t^{\delta+\frac 12^+}\|u\|_{L^\infty H^1}^2.$$
In \cite{BV2} it was shown that for small initial data $u_1\in X_1^\gamma$, the solution $u$ satisfies 
$$u\in L^\infty(1,\infty)L^2\cap L^4(1,\infty)L^\infty,$$ 
and implicitly $u$ belongs to all interpolated Strichartz spaces. So provided that $u_1$ and $\partial_x u_1$ are small enough in $X_1^\gamma$
$$t^{-\frac34}\left\|J(t)\int_1^t S(t,\tau)u^2(\tau)\frac{d\tau}{\tau}\right\|_{L^2}\leq \frac 1{3a} \sup_{1\leq\tau\leq t}\|\tau^{-\frac34}J(\tau)u(\tau)\|_{L^2}+C(u_1).$$
The other quadratic term can be treated the same, and we obtain
$$t^{-\frac34}\left\|J(t)\int_1^t S(t,\tau)\frac{a\,u^2(\tau)}{2\tau}\,d\tau\right\|_{L^2}+t^{-\frac34}\left\|J(t)\int_1^t S(t,\tau)\frac{a^2\,|u|^2(\tau)}{\tau}\,d\tau\right\|_{L^2}$$
$$\leq \frac 13 \sup_{1\leq\tau\leq t}\|\tau^{-\frac34}J(\tau)u(\tau)\|_{L^2}+C(u_1).$$
The cubic term is gauge invariant, so by \eqref{l2} with $\delta<\frac 14$ we obtain
$$t^{-\frac34}\left\|J(t)\int_1^t S(t,\tau)|u|^2u(\tau)\frac{d\tau}{\tau}\right\|_{L^2}$$
$$\leq C(a,\delta)\,t^{-\frac34}\int_1^t \left(\frac{t^{\delta}}{\tau^{\delta}}\|J(\tau)u(\tau)\|_{L^2}\|u(\tau)\|_{L^\infty}^2+\log t\,\frac{t^{\delta+\frac12^+}}{\tau^\delta}\|u(\tau)\|_{L^2}\|u(\tau)\|_{L^\infty}^2\right)\frac{d\tau}{\tau}.$$
Again providing that $u_1$ and $\partial_x u_1$ are small enough in $X_1^\gamma$,
$$t^{-\frac34}\left\|J(t)\int_1^t S(t,\tau)|u|^2u(\tau)\frac{d\tau}{\tau}\right\|_{L^2}$$
$$\leq C(a,\delta) \sup_{1\leq\tau\leq t}\|\tau^{-\frac34}J(\tau)u(\tau)\|_{L^2}\|u\|_{L^\infty H^1}^2+ C(a,\delta)\,t^{-\frac34}\,\log t\,t^{\delta+\frac 12^+}\|u\|_{L^\infty H^1}^3$$
$$\leq \frac 16 \sup_{1\leq\tau\leq t}\|\tau^{-\frac34}J(\tau)u(\tau)\|_{L^2}+C(u_1).$$
In conclusion, for all $t\geq 1$ we have

$$\sup_{1\leq\tau\leq t}\|\tau^{-\frac34}J(\tau)u(\tau)\|_{L^2}\leq \frac 23 \sup_{1\leq\tau\leq t}\|\tau^{-\frac34}J(\tau)u(\tau)\|_{L^2}+C(u_1),$$
and the Lemma follows.
\end{proof}

\end{document}